\documentclass[12pt,a4paper]{article}

\usepackage{amsthm,amsmath,amssymb}
\usepackage{enumerate}
\usepackage[margin=3cm,a4paper]{geometry}
\numberwithin{equation}{section}
\usepackage{graphics,graphicx}
\usepackage{color}

\newtheorem{theorem}{Theorem}[section]

\newtheorem{THM}[theorem]{Theorem}
\newtheorem{LEM}[theorem]{Lemma}

\newtheorem{COR}[theorem]{Corollary}

\newtheorem{QUES}[theorem]{Question}
\theoremstyle{definition}
\newtheorem*{DEF}{Definition}

\newcommand{\pr}{\mathbb{P}}
\newcommand{\ex}{\mathbb{E}}
\newcommand{\indi}{\mathbf{1}}

\newcommand{\rhoo}{\rho_{_{\! 0}}}
\newcommand{\Hom}{\mathrm{Hom}}

\newcommand{\old}[1]{}
\newcommand{\beq}[1]{\begin{equation}\label{#1}}
\newcommand{\eeq}{\end{equation}}

\newcommand{\bb}{\Big}

\newcommand{\bean}{\begin{eqnarray*}}
\newcommand{\eean}{\end{eqnarray*}}
\newcommand{\eps}{\varepsilon}
\newcommand{\bea}[1]{\begin{eqnarray}\label{#1}}
\newcommand{\eea}{\end{eqnarray}}

\newcommand{\ra}{\rightarrow}

\newcommand{\sq}{\mbox{\raisebox{0.2ex}{\tiny{$\, \square \,$}}}}






\begin{document}

\author{
Jeong Han Kim \thanks{School of Computational Sciences,  Korea Institute for Advanced Study (KIAS),  Seoul, South Korea. Email: jhkim@kias.re.kr.~
The author was supported by the National Research Foundation of Korea (NRF) Grant funded by the Korean Government (MSIP) (NRF-2012R1A2A2A01018585) and KIAS internal Research Fund CG046001. This work was partially carried out while the author was visiting Microsoft Research, Redmond and  Microsoft Research, New England.}
\and
Choongbum Lee \thanks{Department of Mathematics,
MIT, Cambridge, MA 02139-4307. Email: cb\_lee@math.mit.edu.}
\and
Joonkyung Lee \thanks{
Mathematical Institute, University of Oxford, OX2 6GG, United Kingdom. 
Email: Joonkyung.Lee@maths.ox.ac.uk. Supported by ILJU Foundation of Education and Culture.
}
}

\title{Two Approaches to Sidorenko's Conjecture}
\date{}

\maketitle

\begin{abstract}
Sidorenko's conjecture states that for every bipartite graph $H$ on $\{1,\cdots,k\}$
\begin{eqnarray*} 
  \int \prod_{(i,j)\in E(H)} h(x_i, y_j) d\mu^{|V(H)|}
			\ge \left( \int h(x,y) \,d\mu^2 \right)^{|E(H)|}
\end{eqnarray*}
holds, where $\mu$ is the Lebesgue measure on $[0,1]$ and 
$h$ is a bounded, non-negative, symmetric, measurable function on $[0,1]^2$.
An equivalent discrete form of the conjecture is 
that the number of homomorphisms from a bipartite graph $H$ 
to a graph $G$ is asymptotically at least the expected 
number of homomorphisms from $H$ to 
the Erd\H{o}s-R\'{e}nyi random graph with the same expected edge density as $G$.
In this paper, we present two approaches to 
the conjecture. 
First, we introduce the notion of tree-arrangeability,
where a bipartite graph $H$ with bipartition $A \cup B$ is
tree-arrangeable if neighborhoods of vertices in $A$ have
a certain tree-like structure. We show that 
Sidorenko's conjecture holds for all 
tree-arrangeable bipartite graphs. 
In particular, this implies that Sidorenko's conjecture
holds if there are two vertices $a_1, a_2$ in $A$ such that each
vertex $a \in A$ satisfies $N(a) \subseteq N(a_1)$
or $N(a) \subseteq N(a_2)$, and also
implies a recent result of Conlon, Fox, and Sudakov \cite{CoFoSu}.
Second, 
if $T$ is a tree
and $H$ is a bipartite graph satisfying Sidorenko's conjecture,
then it is shown that
the Cartesian product $T \sq H$ of $T$ and $H$ also satisfies
Sidorenko's conjecture. This result
implies that, for all $d \ge 2$, the $d$-dimensional grid with
arbitrary side lengths satisfies Sidorenko's conjecture.
\end{abstract}

\section{Introduction}

In this paper, we study a beautiful conjecture of 
Sidorenko \cite{Sidorenko9192} on a correlation inequality related to bipartite graphs.
The conjecture states that, for every bipartite graph $H$ on $\{1,2,\cdots,k\}$,
\begin{eqnarray} 
  \int \prod_{(i,j)\in E(H)} h(x_i, y_j) d\mu^{|V(H)|}
			\ge \left( \int h(x,y) \,d\mu^2 \right)^{|E(H)|}\label{eq:analytic_form}
\end{eqnarray}
holds, where $\mu$ is the Lebesgue measure on $[0,1]$ and 
$h$ is a bounded, non-negative, symmetric, measurable function on $[0,1]^2$.
Throughout the paper, a graph means a simple graph unless specified otherwise. 

Sidorenko \cite{Sidorenko9192, Sidorenko93} noted that 
the functional on the left-hand side of the correlation 
inequality \eqref{eq:analytic_form} 
often appears in various fields
of science: Feynman integrals in quantum field theory \cite{Stell},
Mayer integrals in classical statistical mechanics,
and multicenter integrals in quantum chemistry \cite{DaLeMo}.

The correlation inequality resembles the 
famous FKG inequality \cite{FKG},
which asserts that increasing functions are 
positively correlated when the underlying measure is 
log-supermodular over a finite distributive lattice.
Despite the similarity, 
it is unclear that the FKG inequality can be 
applied to show \eqref{eq:analytic_form}: There exist a function $h$ and 
two edge disjoint subgraphs $H_1$ and $H_2$ of a bipartite graph
such that $\prod_{(i,j)\in E(H_1)} h(x_i,y_j)$ and 
$\prod_{(i,j)\in E(H_2)} h(x_i,y_j)$ are not positively correlated \cite{London}. 
It is unknown  whether every bipartite graph $H$ can be decomposed
into two edge disjoint non-empty subgraphs $H_1$ and $H_2$ (possibly depeding on $h$)
so that $\prod_{(i,j)\in E(H_1)} h(x_i,y_j)$ and 
$\prod_{(i,j)\in E(H_2)} h(x_i,y_j)$ are positively correlated.

\medskip{}

An equivalent discrete form expresses the conjecture 
in terms of graph homomorphisms.   
For two graphs $H$ and $G$, a \emph{homomorphism }from $H$ to $G$
is a mapping $g\,:\, V(H)\rightarrow V(G)$ such that $\{g(v),g(w)\}$
is an edge in $G$ whenever $\{v,w\}$ is an edge in $H$. 
Let $\Hom(H,G)$ denote the set of all homomorphisms
from $H$ to $G$, and let $t_{H}(G)$ be the probability 
that a uniform random mapping from $H$ to $G$ is a homomorphism, 
i.e., $$t_{H}(G)=\frac{|\Hom(H,G)|}{|V(G)|^{|V(H)|}}.$$ 
The discrete form of Sidorenko's conjecture states that 
for every bipartite graph $H$, 
\begin{equation}
t_{H}(G)\ge t_{K_{2}}(G)^{|E(H)|}
~~\text{ for  all graphs $G$.}\label{eq:sido_ineq}
\end{equation}
If $G$ is the Erd\H{o}s-R\'enyi random graph $G(n,p)$,
the mean of $t_H(G(n,p))$ is $p^{|E(H)|}$ plus an error 
term of smaller order of magnitude.
Thus \eqref{eq:sido_ineq} roughly asserts that $t_H(G)$ is minimized when $G$ is a random graph.

In fact, many problems in extremal graph
theory can be expressed using homomorphisms. 
For instance, the chromatic number
$\chi(H)$ is the minimum integer $r$ such that there
exists a homomorphism from $H$ to the complete graph $K_r$ on $r$ vertices.
The problem of finding a copy of $H$ in $G$
can be stated as the problem of finding an injective homomorphism from
$H$ to $G$.
A classical theorem of Tur\'an \cite{Turan}
states that, for all integers $r \ge 3$, 
every graph $G$ with more than  
$\left(1-\frac{1}{r-1}\right)\frac{|V(G)|^2}{2}$
edges contains $K_r$ as a subgraph. 
In terms of graph homomorphisms, 
it may be re-stated as follows: 
For all integers $r\ge 3$, if $t_{K_{2}}(G)>1-\frac{1}{r-1}$, 
then there exists a homomorphism from $K_{r}$ to $G$.
Since the alternative definition of $\chi(H)$ given above implies
that there exists a homomorphism from $H$ to $K_{\chi(H)}$,
it then follows that for all graphs $H$, there
exists a homomorphism from $H$ to $G$ whenever
$t_{K_2}(G)>1-\frac{1}{\chi(H)-1}$, which also follows from 
Erd\H{o}s-Stone Theorem \cite{ErStone}.
Lov\'asz and Simonovits \cite{LoSi} conjectured 
a kind of generalization
of Tur\'an's theorem in 1983 stating that
for  an integer $r\ge 3$ and  $t_{K_2}(G)=\rhoo$  fixed,
$t_{K_r}(G)\geq F(r,\rhoo)+O(|V(G)|^{-2})$ 
for a certain function $F(r,\rhoo)$ of $r$ and $\rhoo$. 
Razborov \cite{Razborov} proved the conjecture for $r=3$ in 2008,
and Nikiforov \cite{Nikiforov} proved it for $r=4$ in 2011.
Recently, Reiher \cite{Reiher} settled the conjecture for all
values of $r$. The equality holds for some complete
$(s+1)$-partite graphs such that the first $s$ parts are of the same size 
and the last part is not larger than the others.

Erd\H{o}s and Simonovits \cite{ErSi,Simonovits} made a similar 
conjecture for bipartite graphs in 1984. They conjectured
that if $H$ is a bipartite graph, then there exists
a positive constant $c_{_{\! H}}$ depending only on $H$  such that 
\begin{equation}
t_{H}(G)\ge c_{_{\! H}} t_{K_2}(G)^{|E(H)|}\label{eq:Erdos_Simon}
\end{equation} for all graphs $G$ (their conjecture
was originally stated in terms of injective homomorphisms
but is equivalent to this form).
It turns out that this conjecture is equivalent to 
Sidorenko's conjecture, as Sidorenko himself showed 
in \cite{Sidorenko9192} using  a tensor power trick.
Recall that, for a bipartite graph $H$,
Sidorenko's conjecture formalizes the idea that the minimum of $t_H(G)$
over all graphs $G$ of the same $t_{K_2}(G)$ must be attained
when $G$ is the random graph with the same $t_{K_2}(G)$.
This is in contrast with the Lov\'asz-Simonovits's 
conjecture above, which is now Reiher's theorem, 
where the extremal graphs have a
deterministic structure. This may be regarded as an example showing that 
there is a difference between fundamental 
structures of bipartite graphs and complete graphs.

\medskip{}

Sidorenko's conjecture is known to be true 
only for a few bipartite graphs $H$.
We say that a bipartite graph $H$ has \emph{Sidorenko's property}
if (\ref{eq:sido_ineq}) holds for all graphs $G$.
That paths have Sidorenko's property \cite{BlRo, MuSm}
was proved around 1960, earlier than Sidorenko suggested the conjecture.
Sidorenko himself \cite{Sidorenko9192} showed that trees, even cycles, and complete bipartite
graphs have Sidorenko's property. 
He also proved that, for a bipartite graph $H$ 
with bipartition $A\cup B$, $H$ has Sidorenko's property if $|A|\leq 4$.
Recently, Hatami \cite{Hatami} proved that hypercubes have Sidorenko's property
by developing a concept of norming graphs.
He proved that every norming graph has Sidorenko's property, and 
that all hypercubes are norming graphs.
Conlon, Fox, and Sudakov \cite{CoFoSu} proved
that if $H$ is a bipartite graph with a bipartition $A\cup B$
and there is a vertex in $A$ adjacent to all vertices in $B$, 
then $H$ has Sidorenko's property.
Sidorenko \cite{Sidorenko9192} and Li and Szegedy \cite{LiSz}
introduced some recursive processes that construct a new graph
from a collection of graphs so that the new one has Sidorenko's property
whenever all the graphs in the collection have the property.
Li and Szegedy \cite{LiSz} introduced some recursive processes that construct a new graph
from a collection of graphs so that the new one has Sidorenko's property
whenever all the graphs in the collection have the property.
On the other hand, the simplest graph not known to have Sidorenko's property
is $K_{5,5}\setminus C_{10}$, a $3$-regular graph on $10$ vertices.

\medskip{}

In this paper, we further study Sidorenko's conjecture by taking two different approaches. 
The first approach uses normalizations by certain conditional expectations 
and Jensen's inequality for logarithmic functions.
This approach is partly motivated by Li and Szegedy \cite{LiSz}.
For a bipartite graph $H$ with bipartition $A \cup B$, 
$H$ is tree-arrangeable if the family of neighborhoods of vertices in $A$ 
has a certain tree-like structure.
We show that all tree-arrangeable bipartite graphs have Sidorenko's property.
For  instance, if there is a vertex in $A$ adjacent to all vertices in $B$,
then $H$ is tree-arrangeable with a star as the corresponding tree. 
Hence our result generalizes the result of Conlon, Fox, and Sudakov \cite{CoFoSu}.

Second, we develop a recursive procedure that preserves
Sidorenko's property. For two graphs $H_1$ and $H_2$,
let the {\em Cartesian product} $H_1 \sq H_2$ (also known as the {\em box product}) 
be the graph over the vertex set
$V(H_1) \times V(H_2)$ such that two vertices $(u_1,u_2)$ and $(v_1,v_2)$ 
are adjacent if and only if 
(i) $u_1$ and $v_1$ are adjacent in $H_1$ and $u_2=v_2$,
or (ii) $u_2$ and $v_2$ are adjacent in $H_2$ and $u_1=v_1$.
We prove that if $T$ is a tree and $H$ is a bipartite graph 
with Sidorenko's property,
then $T \sq H$ also has Sidorenko's property.

\medskip

To present the result that 
the first approach yields, let $H$ be a bipartite graph with bipartition $A\cup B$. 
For a vertex $u$ of $H$, the neighborhood  of $u$ in $H$ is denoted
by $\Lambda_u$.\footnote{
We intentionally avoid using standard notation $N(a)$ in 
order to clarify that the underlying graph is $H$, as  
two different graphs $H$ and $G$ are concerned.}
An independent set $U$ of $H$ is \emph{$T$-arrangeable}
for a tree $T$ on $U$, if 
\begin{equation}\label{intersection}
\Lambda_{u}\cap\Lambda_{v}=\bigcap_{w\in P}\Lambda_{w}
~~~\mbox{for every path $P$ in $T$ connecting  $u$ and  $v$.}
\end{equation}
We say that $U$ is \emph{tree-arrangeable} if it is
$T$-arrangeable for some tree $T$, and $H$ is
\emph{tree-arrangeable} if there exists a 
bipartition $A\cup B$ of $H$ such that
 $A$ is tree-arrangeable.

For example, an independent set $U$ with 
$|U|\le2$ is trivially tree-arrangeable. 
Thus, if  a bipartite graph $H$ has a bipartition 
$A\cup B$ with $|A|\le 2$, then it is tree-arrangeable.
As another example, let $H$ be a bipartite graph with 
bipartition $A\cup B$. If $H$ has a vertex $a \in A$ adjacent
to all vertices in $B$,
then $A$ is $T$-arrangeable where $T$ is a star on $A$ centered at $a$.
For complete bipartite graphs $H$, 
any tree on $A$ can be used to show 
that $A$, and thus $H$, is tree-arrangeable. 
The last example exhibits the fact that 
the choice of $T$ is not necessarily unique.

The concept of tree-arrangeability
is closely related to tree decompositions \cite{RobertSeymour}, and 
also to Markov Random Field Models used in statistical physics \cite{StatPhy}
and image processing \cite{ImgProcess}. 
This will be discussed more in the concluding remarks.

We show that tree-arrangeable bipartite graphs have Sidorenko's property.
\begin{THM} \label{thm:main_thm}
If a bipartite graph $H$ is tree-arrangeable, then $H$ has Sidorenko's property.
\end{THM}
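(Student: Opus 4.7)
The plan is to induct on $|A|$, exploiting the tree $T$ witnessing tree-arrangeability. Normalize so that $\int h\, d\mu^{2} = 1$; it then suffices to show $I_H := \int \prod_{ab \in E(H)} h(x_a,y_b)\, d\mu^{|V(H)|} \ge 1$. The base case $|A|=1$ is immediate: $H$ is a star $K_{1,|B|}$, and Jensen's inequality for the convex power $t \mapsto t^{|B|}$ yields $I_H = \int d(x)^{|B|}\, dx \ge \left(\int d\, dx\right)^{|B|} = 1$, where $d(x) := \int h(x,y)\, dy$.

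For the inductive step, pick a leaf $a$ of $T$ with parent $p$. The key structural consequence of $T$-arrangeability is that the unique $T$-path from $a$ to any other $u \in A$ begins with the edge $ap$, so
\[
\Lambda_a \cap \Lambda_u \;\subseteq\; \Lambda_a \cap \Lambda_p \quad\text{for every } u \in A \setminus \{a\}.
\]
Consequently $\Lambda_a$ decomposes as the disjoint union of $B_a^{*} := \Lambda_a \setminus \Lambda_p$ and $B_a^{c} := \Lambda_a \cap \Lambda_p$, where every $b \in B_a^{*}$ has $a$ as its unique neighbor in $A$, and $B_a^{c} \subseteq \Lambda_p$. By Fubini I integrate out the ``private'' variables $y_b$ for $b \in B_a^{*}$ (each appearing only in $h(x_a,y_b)$, and therefore producing a factor $d(x_a)^{|B_a^{*}|}$) and then $x_a$, yielding
\[
I_H = \int G(y_{B_a^{c}})\cdot \prod_{\substack{a'\in A\setminus\{a\}\\ b'\in \Lambda_{a'}}} h(x_{a'},y_{b'})\, dx_{A\setminus\{a\}}\, dy_{B\setminus B_a^{*}},
\]
where $G(y_{B_a^{c}}) := \int d(x)^{|B_a^{*}|}\prod_{b \in B_a^{c}} h(x,y_b)\, dx$. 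Crucially, since $B_a^{c} \subseteq \Lambda_p$, the factor $\prod_{b \in B_a^{c}} h(x_p, y_b)$ already appears in the outer integrand.

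The heart of the argument is a Jensen-type lower bound on $G$ that dovetails with the outer integrand. Working with the conditional probability density $y \mapsto h(x_p, y)/d(x_p)$ (or a symmetric construction in $x_a$) and applying Jensen's inequality to the concave function $\log$, I expect to obtain a multiplicative lower bound for $G$ whose combination with the outer integrand leaves exactly the Sidorenko integrand for the reduced graph $H' := H - a - B_a^{*}$. Since $H'$ remains tree-arrangeable with respect to $T' := T - a$ (removing a leaf preserves the path-intersection condition on the remaining vertices), the inductive hypothesis gives $I_{H'} \ge 1$ and completes the argument.

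The main obstacle is identifying the precise form of the Jensen-type inequality so that the resulting correction factors telescope into the Sidorenko integrand for $H'$ with zero residue. The role of tree-arrangeability is essential here: the inclusion $B_a^{c} \subseteq \Lambda_p$ supplies the hook at which $G$ is matched against the conditional expectation at $x_p$ already present in the outer integrand, while the preservation of tree-arrangeability under removal of a leaf is what keeps the induction alive. I expect the authors' framework of \emph{normalization by conditional expectations} to be precisely the device that makes this absorption clean.
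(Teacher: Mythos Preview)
Your outline captures the right structural consequences of tree-arrangeability---that $\Lambda_a\cap\Lambda_u\subseteq\Lambda_a\cap\Lambda_p$ for every $u\in A\setminus\{a\}$, that each $b\in B_a^{*}$ has $a$ as its unique $A$-neighbour, and that deleting a leaf of $T$ preserves arrangeability---but, as you acknowledge, it stops at the decisive step. After the partial integration you have $I_H=\int G(y_{B_a^{c}})\cdot F$, with $F$ the $H'$-integrand, and you want $I_H\ge I_{H'}=\int F$. A pointwise bound $G\ge 1$ is false (already when $|B_a^{*}|=0$ and $|B_a^{c}|=1$ one has $G(y)=\int h(x,y)\,dx$, merely the degree of $y$), and the unweighted average $\int G\,dy_{B_a^{c}}\ge 1$ is not the inequality you need: $G$ is correlated with $F$ through $y_{B_a^{c}}$, and those variables may appear in further factors $h(x_{a'},y_b)$ for $a'\neq a,p$, so no single-variable Jensen against the conditional density $h(x_p,\cdot)/d(x_p)$ produces a lower bound that cancels cleanly against the rest of $F$. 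What is actually required is that the $F$-weighted mean of $G$ be at least $1$, and nothing in the sketch supplies that.

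The paper does not reduce $I_H$ to $I_{H'}$. It builds, for all of $A$ at once, a tilted density
\[
f_{T_r}=\prod_{a\in A}\frac{f_a}{\ex[f_a\mid \Gamma(a;T_r)]},\qquad f_a\approx\frac{\indi(x_a\sim x(\Lambda_a))}{\rhoo\,\rho(x_a)^{|\Lambda_a|-1}},
\]
where $\Gamma(a;T_r)$ records the variables on the root side of $a$ in $T_r$; tree-arrangeability (Lemma~\ref{JHprop}) makes this conditioning collapse to conditioning on $x(\Lambda_a\cap\Lambda_{a_r})$, with $a_r$ the parent of $a$. Lemma~\ref{JHmain} then shows $\ex[f_{T_r}]=1$ and $\ex[g\,f_{T_r}]=\ex[g\,f_u]/(1+\varepsilon)$ whenever $g$ depends only on $(x_u,x(\Lambda_u))$; your leaf-removal induction is used \emph{here}, to peel off leaves $\ell\neq u$ via $\ex[g\,f_{T_u}]=\ex[g\,f_{S_u}]$ for $S=T\setminus\{\ell\}$, but its output is a property of the tilted measure, not a comparison of $I_H$ with $I_{H'}$. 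The theorem follows from a \emph{single} application of Jensen for $\ln$ against the probability measure $f_{T_r}$, after writing $t_H(G)=\ex\big[f_{T_r}\prod_{a}\rhoo\,\rho(x_a)^{|\Lambda_a|-1}\ex[f_a\mid\Gamma(a;T_r)]\big]$ and bounding each resulting summand using the lemma together with convexity of $z\ln z$. The conditional normalizers you anticipated are indeed the key device, but they are inserted for every $a\in A$ simultaneously rather than used to strip off one leaf at a time.
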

We have seen that a complete bipartite graph, a 
bipartite graph with bipartition $A\cup B$ and  $|A|\leq 2$, 
and a bipartite graph having a vertex adjacent to
all vertices on the other side are tree-arrangeable.
Therefore, our theorem implies that these graphs have Sidorenko's property.
Another interesting example is a bipartite graph $H$ with bipartition $A\cup B$ and two vertices $a_1,a_2 \in A$
such that $\Lambda_a\subseteq\Lambda_{a_1}$ or $\Lambda_a\subseteq\Lambda_{a_2}$ 
for every $a\in A$.
In this case, we may take a tree $T$ on $A$ such that 
there is an edge connecting $a_1$ and $a_2$, and each
$a\neq a_1,a_2$ in $A$ with $\Lambda_a\subseteq\Lambda_{a_1}$ is a leaf adjacent to $a_1$,
and the other vertices are leaves adjacent to $a_2$.
It is easy to see that $H$ is $T$-arrangeable, and thus $H$ has Sidorenko's property. 
This example does not seem to follow from the recursive procedure introduced by Li and Szegedy \cite{LiSz}.

\medskip{}

The second theorem proves that Sidorenko's property is preserved
under taking Cartesian products with trees, here the Cartesian 
product $H_1 \sq H_2$ of two graphs $H_1$ and $H_2$
is  the graph on $V(H_1)\times V(H_2)$ 
such that two vertices $(u_1,u_2)$ and $(v_1,v_2)$ are adjacent if and only if 
either (i) $u_1$ and $v_1$ are adjacent in $H_1$ and $u_2=v_2$,
or (ii) $u_2$ and $v_2$ are adjacent in $H_2$ and $u_1=v_1$.

\begin{THM} \label{thm:box_product}
If $T$ is a tree and $H$ is a bipartite graph having Sidorenko's property, 
then $T \sq H$ also has Sidorenko's property.
\end{THM}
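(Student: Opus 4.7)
The plan is to proceed by induction on $n=|V(T)|$. The base case $n=1$ is trivial since $T\sq H=H$ has Sidorenko's property by hypothesis. For the inductive step, I write
\begin{equation*}
t_{T\sq H}(G)=\int_{[0,1]^{nN}}\prod_{t\in V(T)}F_H(\bar x_t)\prod_{\{s,t\}\in E(T)}M(\bar x_s,\bar x_t)\,d\mu^{nN},
\end{equation*}
where $N=|V(H)|$, $\bar x_t\in[0,1]^{V(H)}$, $F_H(\bar x)=\prod_{vw\in E(H)}h(x_v,x_w)$, and $M(\bar x,\bar y)=\prod_{v}h(x_v,y_v)$. Setting $p:=\int h\,d\mu^2$ (and assuming $p>0$), Sidorenko's property of $H$ gives $t_H(G)\geq p^{|E(H)|}>0$, so $\tilde\lambda:=F_H\,d\mu^N/t_H(G)$ is a probability measure on $[0,1]^N$. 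Applying Sidorenko's inequality for the tree $T$ on the probability space $([0,1]^N,\tilde\lambda)$ with the symmetric kernel $M$---valid since trees satisfy Sidorenko's conjecture on any probability space---yields
\begin{equation*}
t_{T\sq H}(G)\geq\frac{t_{K_2\sq H}(G)^{n-1}}{t_H(G)^{n-2}}.
\end{equation*}
Combined with $t_H(G)\geq p^{|E(H)|}$, the theorem reduces to the boost inequality
\begin{equation*}
t_{K_2\sq H}(G)\geq t_H(G)\cdot p^{|E(H)|+N}. \qquad (\ast)
\end{equation*}
Indeed, $(\ast)$ together with $t_H(G)\geq p^{|E(H)|}$ gives $t_{K_2\sq H}^{n-1}/t_H^{n-2}\geq t_H\cdot p^{(n-1)(|E(H)|+N)}\geq p^{n|E(H)|+(n-1)N}=p^{|E(T\sq H)|}$.

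The core of the proof is thus the boost inequality $(\ast)$, which is strictly stronger than Sidorenko's conjecture for $K_2\sq H$ alone whenever $t_H(G)>p^{|E(H)|}$. To establish $(\ast)$, I write
\begin{equation*}
t_{K_2\sq H}(G)=\int F_H(\bar x)\,Q(\bar x)\,d\bar x,\quad Q(\bar x):=\int F_H(\bar y)\prod_v h(x_v,y_v)\,d\bar y,
\end{equation*}
and observe that $\int Q(\bar x)\,d\bar x=t_{H^+}(G)$, where $H^+$ denotes the graph obtained from $H$ by attaching a pendant vertex at every vertex of $H$. By iteratively applying the standard pendant-preservation lemma (attaching a pendant to a Sidorenko bipartite graph preserves Sidorenko's property), $t_{H^+}(G)\geq p^{|E(H)|+N}$, and so $(\ast)$ is equivalent to the positive-correlation statement $t_{K_2\sq H}(G)\geq t_H(G)\cdot t_{H^+}(G)$, i.e., that the non-negative functions $F_H(\bar x)$ and $Q(\bar x)$ are positively correlated under $d\mu^N$.

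The main obstacle is establishing this correlation. The Lebesgue measure on $[0,1]^N$ carries no natural lattice order under which both $F_H$ and $Q$ are monotone, so FKG is not directly applicable. My plan is instead to exploit the ``swap'' representation of $T\sq H$ (expressing it via $|V(H)|$ copies of $T$ matched by $E(H)$-edges rather than $|V(T)|$ copies of $H$), which rewrites $t_{K_2\sq H}(G)$ as a Sidorenko integral for $H$ on $([0,1]^2, h(X_1,X_2)\,d\mu^2/p)$ with the kernel $h(X_1,Y_1)h(X_2,Y_2)$, yielding $t_{K_2\sq H}(G)\geq p^{N-2|E(H)|}\cdot t_{C_4}(G)^{|E(H)|}$; then $(\ast)$ reduces to the auxiliary inequality $t_{C_4}(G)^{|E(H)|}\geq t_H(G)\cdot p^{3|E(H)|}$, which can be obtained from Cauchy-Schwarz combined with the log-convexity of the moment sequence $\langle 1,T_h^k 1\rangle$ of the integral operator $T_h$ associated to $h$. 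The technically delicate step is assembling these ingredients uniformly in the bipartite graph $H$ with Sidorenko's property.
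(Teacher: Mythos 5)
Your first reduction — biasing the measure by $F_H\,d\mu^N/t_H(G)$ and invoking Sidorenko's inequality for the tree $T$ with kernel $M$ on that space to obtain $t_{T\sq H}(G)\ge t_{K_2\sq H}(G)^{n-1}/t_H(G)^{n-2}$ — is correct and is a genuinely different starting point from the paper, which instead passes to the auxiliary graph $\psi_T(G)$ whose vertices are the homomorphisms $T\to G$ and works combinatorially. The derivation that the boost inequality $(\ast)$ would then close the induction is also correct. The problem lies entirely in your proof of $(\ast)$: the ``swap'' bound $t_{K_2\sq H}(G)\ge p^{N-2|E(H)|}t_{C_4}(G)^{|E(H)|}$ is fine (it is the same bound the paper extracts from Sidorenko's property of $H$ applied to $\psi_{K_2}(G)$), but the auxiliary inequality $t_{C_4}(G)^{|E(H)|}\ge t_H(G)\,p^{3|E(H)|}$ that you then need is false. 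Take $H=K_{1,d}$ (a tree, hence Sidorenko) and the rank-one graphon $W(x,y)=f(x)f(y)$ with $f\equiv 1$ on $[0,1/2]$ and $f\equiv 0.9$ on $(1/2,1]$. Then $p=(\int f)^2=0.95^2$, $t_{C_4}=(\int f^2)^4=0.905^4$, and $t_{K_{1,d}}=(\int f)^d\int f^d$, so
\[
\frac{t_{C_4}^{\,d}}{t_{K_{1,d}}\,p^{3d}}
=\frac{(0.905^4/0.95^7)^d}{\tfrac12(1+0.9^d)}
\approx\frac{(0.960)^d}{\tfrac12(1+0.9^d)}\xrightarrow[d\to\infty]{}0,
\]
which is $<1$ already for $d\ge 18$. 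So the route through the auxiliary inequality cannot work.

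Whether $(\ast)$ itself is true for every Sidorenko bipartite $H$ is a separate question you have not resolved; it holds in the rank-one example above, but the ``equivalent positive-correlation statement'' is actually strictly stronger than $(\ast)$ (you replace $p^{|E(H)|+N}$ by the generally larger $t_{H^+}(G)$), and you give no proof of either. Likewise the ``standard pendant-preservation lemma'' is not a routine fact and would itself need an argument. The paper sidesteps all of this by exploiting Lemma \ref{lem:sido_maxdegree}: after reducing (via a tensor-power trick, Lemma \ref{lem:tensor}) to graphs $G$ of maximum degree $O(|E(G)|/|V(G)|)$, the quantity $|\Hom(T,G)|$ that appears with a negative exponent can be bounded from above by a simple branching count, so no ``boost'' of Sidorenko's inequality for $H$ is ever needed. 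That degree-reduction step is the key idea your proposal is missing.
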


Since paths of all lengths are known to have Sidorenko's property,
by repeatedly applying Theorem \ref{thm:box_product} with paths
$P_1, P_2, \cdots, P_d$ of various lengths, we obtain that
the $d$-dimensional grid $P_1 \sq P_2 \sq \cdots \sq P_d$
has Sidorenko's property. 
This approach especially  yields a
simple proof of the statement that the hypercube
$K_2 \sq K_2 \sq \cdots \sq K_2$ satisfies Sidorenko's property,
which was first proven by Hatami [10].

\medskip{}

The paper is organized as follows.
The proof of Theorem \ref{thm:main_thm} will be given in Section
\ref{sec:proof_mainthm}.
The proof of Theorem \ref{thm:box_product} and its applications
will be given in Section \ref{sec:recursive}.
In the last section, Section \ref{sec:conclude}, we will 
further discuss tree-arrangeability in the context of
tree decompositions and Markov Random Field Models,
and pose some open problems.


\section{Tree-arrangeable Bipartite Graphs}
\label{sec:proof_mainthm}

In this section, we prove Theorem \ref{thm:main_thm}
using normalizations by certain conditional expectations 
and Jensen's inequality for logarithmic functions.
Recall that $t_H(G)$ represents the probability that the uniform random mapping 
from $V(H)$ to $V(G)$ is a graph homomorphism. 
Let $x: V(H) \rightarrow V(G)$ be a mapping chosen uniformly at random among
all $|V(G)|^{|V(H)|}$ mappings from $V(H)$ to $V(G)$. 
For the sake of simplicity, we write
\[
	x_u := x(u) \quad \text{for } u \in V(H)  \quad \text{and} \quad
	x(\Lambda) := \, \mbox{the sequence} ~(x_u)_{u \in \Lambda} \quad \text{for } \Lambda \subseteq V(H).
\]
As in the previous section,  
$\Lambda_u$ is the set of neighbors of $u$ in $H$, for $u \in V(H)$.
For a bipartite graph $H$ with bipartition $A \cup B$,
we now have that 
\[
	t_H(G)
	=
	\ex\left[	\prod_{a\in A } \indi(x_{a} \sim x(\Lambda_a )\text{ in } G)\right],
\]
where $\indi(x_{a} \sim x(\Lambda_a )\text{ in }G)$ is the indicator random variable of the
event that 
$x_a$ 
is adjacent to all vertices in $x(\Lambda_a)$ in $G$ and $\indi(x_{a} \sim \emptyset \text{ in }G)\equiv 1$.

For a vertex $v$ of $G$, 
it is convenient to consider the degree density 
$\rho_G (v):= \frac{d_{G}(v) }{n}$ 
rather than the degree itself. 
The mean of $\rho_G(v)$ over all $v\in V(G)$ is denoted by $\rhoo(G)$. Then, 
\[ 
   \rhoo(G)=\frac{1}{n}\sum_{v\in V(G)}\rho_G(v)=\frac{2|E(G)|}{n^{2}}=t_{K_{2}}(G).
\]
We will simply write 
$\indi(x_{a} \sim x(\Lambda_a ))$, $\rho(v)$, and $\rhoo$ 
for $\indi(x_{a} \sim x(\Lambda_a )\text{ in }G)$,
$\rho_G(v)$, and $\rhoo(G)$, respectively,
if  no confusion arises.

\medskip\noindent
In these notations, $H$ has Sidorenko's property if 
\beq{JH1}
	\ex\left[	\prod_{a\in A } \indi(x_{a} \sim x(\Lambda_a ))\right] \ge \rhoo^{|E(H)|},
\eeq
or equivalently,
\begin{align} 
	\label{eq:log_target}
	\ln \ex\left[	\prod_{a\in A } \indi(x_{a} \sim x(\Lambda_a ))\right]
	 \ge |E(H)| \ln \rhoo.
\end{align}

\medskip

The case of  $H$ being a star, say centered at $a$, 
is a simple example that  may illustrate
how to show  \eqref{JH1} or \eqref{eq:log_target} using Jensen's inequality. 
Taking $A=\{a\}$, we may have that 
\begin{equation}\label{eq:simple_case}
	\ex	\left[\indi(x_{a} \sim x(\Lambda_a ))\right]
	=\ex\big[\ex	\left[\indi(x_{a} \sim x(\Lambda_a ))\vert x_a\right]\big]
	=\ex\left[\rho(x_a)^{|\Lambda_a|}\right]\geq
\rhoo^{|\Lambda_a|}	=\rhoo^{|E(H)|},
\end{equation}
where the last inequality follows from Jensen's inequality.
For another way to show the inequality, one may normalize the indicator function $\indi(x_{a} \sim x(\Lambda_a ))$ by 
$\rhoo\rho(x_{a})^{|\Lambda_a|-1}$, i.e., 
$$f_a:= \frac{\indi(x_{a} \sim x(\Lambda_a ))}{\rhoo\rho(x_{a})^{|\Lambda_a|-1}},$$
provided $G$ has no isolated vertex. (It will be shown that we can always assume so).
A similar argument to that used in \eqref{eq:simple_case} implies that $f_a$ is normalized: 
\beq{JH10}
\ex[f_a]=\ex\big[\ex[f_a|x_a]\big]
=\ex\left[\frac{\ex[\indi(x_a\sim x(\Lambda_a))|x_a]}{\rhoo\rho(x_a)^{|\Lambda_a|-1}}\right]
=\ex[\rho(x_a)/\rhoo]=1.
\eeq
Since
\begin{equation}
	\ln \ex\big[\indi(x_{a} \sim x(\Lambda_a ))\big]
	=\ln\ex[f_a\rhoo\rho(x_a)^{|\Lambda_a|-1}]\label{eq:log_baby}
\end{equation}
and the logarithmic function is concave, 
Jensen's inequality on the new probability measure 
$\pr^*[\mathcal{E}]=\ex[f_a\indi_{\mathcal{E}}]$
yields
$$
\ln \ex\big[\indi(x_{a} \sim x(\Lambda_a ))\big]\geq
\ex[f_a\ln\left(\rhoo\rho(x_a)^{|\Lambda_a|-1}\right)]
	= \ex[f_a\ln\rhoo]+(|\Lambda_a|-1)\ex[f_a\ln\rho(x_a)].
$$
As $\ex[f_a\ln\rhoo]=\ex[f_a]\ln\rhoo=\ln\rhoo$
and 
$$
\ex[f_a\ln\rho(x_a)]
=\ex\big[\ex[f_a\ln\rho(x_a)|x_a]\big]
=\ex\big[\ex[f_a|x_a]\ln\rho(x_a)\big]
=\rhoo^{-1}\ex[\rho(x_a)\ln\rho(x_a)],
$$ 
the convexity of the function $x\ln x$ gives 
$$
\ex[\rho(x_a)\ln\rho(x_a)]\geq\ex[\rho(x_a)]\ln\ex[\rho(x_a)]=\rhoo\ln\rhoo
$$ 
and thus
$$
	\ln \ex\big[\indi(x_{a} \sim x(\Lambda_a ))\big]
	\geq\ln\rhoo +(|\Lambda_a|-1)\rhoo^{-1}\ex[\rho(x_a)\ln\rho(x_a)]
	\geq |E(H)|\ln\rhoo.
$$
This scheme is motivated by Li and Szegedy \cite{LiSz}.
Though it looks much more complicated than \eqref{eq:simple_case}, 
this approach turns out to be more powerful in proving that 
certain bipartite graphs have Sidorenko's property. 

\medskip


\newcommand{\mn}{\medskip\noindent}

\mn

We first show that it is enough to consider $G$ with no isolated vertex.
\begin{LEM} Let $H$ be a bipartite graph. 
If 
$\, t_H(G) \geq (t_{K_2} (G))^{|E(H)|} $
for all graphs $G$ with no isolated vertex,
then $H$ has Sidorenko's property.
\end{LEM}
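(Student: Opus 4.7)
The plan is to reduce the statement to the hypothesis by removing isolated vertices of $G$. Let $G$ be an arbitrary graph on $n$ vertices, let $m$ be the number of isolated vertices of $G$, and let $G'$ be the induced subgraph on the remaining $n' = n - m$ vertices, so that $G'$ has no isolated vertex. I first want to note that without loss of generality one may assume $H$ itself has no isolated vertex: if $u \in V(H)$ is isolated, then $|\Hom(H,G)| = n \cdot |\Hom(H - u, G)|$, so $t_H(G) = t_{H-u}(G)$, and $|E(H)| = |E(H-u)|$, reducing to the same statement for $H-u$.

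Once $H$ has no isolated vertex, every vertex of $H$ has at least one neighbor in $H$, so in any homomorphism $g : H \to G$ no vertex of $H$ can be sent to an isolated vertex of $G$. Hence every homomorphism from $H$ to $G$ factors through $G'$, giving
\[
|\Hom(H,G)| = |\Hom(H,G')|, \qquad |E(G)| = |E(G')|.
\]
From the definitions this translates to
\[
t_H(G) = t_H(G') \left(\frac{n'}{n}\right)^{|V(H)|}, \qquad t_{K_2}(G) = t_{K_2}(G') \left(\frac{n'}{n}\right)^{2}.
\]

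The hypothesis applied to $G'$ gives $t_H(G') \ge t_{K_2}(G')^{|E(H)|}$, so
\[
t_H(G) \ge t_{K_2}(G')^{|E(H)|} \left(\frac{n'}{n}\right)^{|V(H)|}
= t_{K_2}(G)^{|E(H)|} \left(\frac{n'}{n}\right)^{|V(H)| - 2|E(H)|}.
\]
Since $n'/n \le 1$, to finish it suffices to show that $|V(H)| \le 2|E(H)|$; but this is immediate because each vertex of $H$ has degree at least $1$, so $2|E(H)| = \sum_{v \in V(H)} \deg(v) \ge |V(H)|$. The degenerate case $n' = 0$ (where $G$ has no edges) is handled separately: then $t_{K_2}(G) = 0$ and the inequality is trivial.

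The argument is essentially a bookkeeping exercise, and there is no real obstacle: the only point requiring a moment of care is the handedness of the exponent comparison $(n'/n)^{|V(H)|} \ge (n'/n)^{2|E(H)|}$, which works precisely because removing isolated vertices of $H$ first guarantees the degree-sum bound $|V(H)| \le 2|E(H)|$.
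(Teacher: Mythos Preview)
Your proof is correct and is essentially the same as the paper's argument: both remove isolated vertices from $H$ and from $G$, compare the resulting homomorphism densities via the scaling factors $(n'/n)^{|V(H)|}$ and $(n'/n)^{2|E(H)|}$, and conclude using $|V(H)|\le 2|E(H)|$ for $H$ without isolated vertices. The only cosmetic difference is that the paper strips isolated vertices from $H$ and $G$ in one step (defining $H_1,G_1$), whereas you first reduce $H$ and then $G$; your explicit handling of the degenerate case $n'=0$ is a nice touch the paper omits.
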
 

\begin{proof} Let $G$ be a  graph  on $n$ vertices with $k\geq 1$ isolated vertices. Then, for the induced subgraph $H_1$ (resp. $G_1$) of $H$ on the set of all non-isolated vertices in $H$ (resp. $G$),  it follows that 
$$ t_H (G) 
= t_{H_1} (G)
= t_{H_1} (G_1)  \left(\frac{n-k}{n}\right)^{|V(H_1)|}
$$
and 
$$ (t_{K_2} (G))^{|E(H)|}
= (t_{K_2} (G_1))^{|E(H)|} \left(\frac{n-k}{n}\right)^{2|E(H)|}
, $$
where $(\frac{n-k}{n})^{|V(H_1)|}$ is the probability that all vertices in $V(H_1)$ are mapped to non-isolated 
vertices of $G$ and similarly  $(\frac{n-k}{n})^{2}$
is the probability that the two vertices in $V(K_2)$ are mapped to non-isolated 
vertices of $G$. Since 
$t_{H} (G_1)\geq (t_{K_2} (G_1))^{|E(H)|}$ by the hypothesis  and 
$|V(H_1)|\leq 2|E(H_1)|\leq 2|E(H)|$,  
we have that  
$t_H (G)  \geq (t_{K_2} (G))^{|E(H)|}$, as desired. 
\end{proof}

We now assume that $G$ has no isolated vertex. 
To bound 
$$ t_H(G) = 
\ex\Big[	\prod_{a\in A } \indi(x_{a} \sim x(\Lambda_a ))\Big]$$
from below, 
we plan to normalize the indicator random variable 
$\indi(x_{a} \sim x(\Lambda_a ))$ twice,  first by 
$\rhoo \rho(x_{a})^{|\Lambda_a|-1}$ as before, 
and then by a certain conditional expectation. 
In both cases, it is important that we avoid dividing by zero.
Since $G$ has no isolated vertex, the first normalization causes no problem.
The second normalization will be  possible if 
$\frac{\indi(x_{a} \sim x(\Lambda_a ))}{\rhoo\rho(x_{a})^{|\Lambda_a|-1}}$
is not zero, which is unfortunately not true in general. 
Hence, we consider a slight variation of the function.  Namely, for a vertex $u$ of $H$, 
\[
  f_{u}=f_{\varepsilon, u} = \frac{\indi(x_{u} \sim x(\Lambda_u ))+\varepsilon\rho(x_u)^{|\Lambda_u|}}{\rhoo\rho(x_{u})^{|\Lambda_u|-1}},
\]
where $\varepsilon>0$  will go to $0$. The term involving $\varepsilon $ is purely technical 
to make the second normalizing factor below non-zero. 
Though it is a slight abuse of notation, we have written 
$f_u$ for $f_{\varepsilon, u}$ for the sake of simplicity. 

The second normalization requires some notations: 
Let $H$ be a bipartite graph  and let $T$ be a tree on an independent set $U$ of $H$. For 
vertices $r, u\in  U$,  $T_r$ denotes the  tree $T$ 
rooted at $r$, and $\Gamma (u; T_r )= (x_v, x(\Lambda_v))_{v\in C}$, where $C$ 
is the component of $T \setminus \{u\}$ containing  the root $r$. 
That is, $\Gamma(u;T_r)$ is a vector-valued random variable, the components of which are the pairs $(x_v, x(\Lambda_v)), v\in C$. 
If  $u=r$, 
then $C=\emptyset$,  $\Gamma (r; T_r)=\emptyset$ and hence   
$\ex[\, g \, | \Gamma (r;T_r)] = \ex[g]$ for all $g$. 
In particular, 
$\ex[ f_{r}| \Gamma (r; T_r)]=
\ex[ f_{r}] =1+\varepsilon$ by the same argument as in \eqref{JH10}. 
The denominator for the second normalization is 
 $\ex[ f_u | \Gamma (u; T_r)]$ for each $u\in U$. 
Note that, 
for $u\not= r$, 
\beq{JH12}  
 \! \!  \ex[ f_u | \Gamma (u; T_r)]
 = \ex[ f_u | x(  \cup_{v\in C} (\Lambda_u\cap \Lambda_v))]
 = \ex_{x_u} \Big[ \frac{\indi( x_u \sim x(  \cup_{v\in C} (\Lambda_u\cap \Lambda_v))) }{\rhoo \rho(x_u)^{| \cup_{v\in C} (\Lambda_u\cap \Lambda_v)|-1}}\Big]+ \varepsilon
 \eeq
 is a random variable depending only 
 on $x(  \cup_{v\in C} (\Lambda_u\cap \Lambda_v))$, where $C$ is the component of $T_r \setminus \{u \}$
 containing $r$ and the last expectation $\ex_{x_u}$ is taken  over 
the uniform random vertex $x_u$ of $G$. 
 
We now define, for a tree $T$ on an independent set of 
a bipartite graph $H$  and $r\in V(T)$, 
$$ 
 f_{_{_{\!  T_r}}} := 
\prod_{a\in V(T) } \frac{f_a}{\ex[f_a| \Gamma (a; T_r) ]}.
$$
For a bipartite graph $H$  with bipartition $A\cup B$, since 
$$
t_H(G)
=
\ex\left[
\prod_{a\in A } \indi\left(x_{a} \sim x(\Lambda_a )\right)\right]
=\lim_{\varepsilon\rightarrow 0}
\ex\left[
\prod_{a\in A } \left(\indi\left(x_{a} \sim x(\Lambda_a )\right)
+\varepsilon\rho(x_a)^{|\Lambda_a|}\right)\right]
$$
by, e.g., the dominated convergence theorem,  
it suffices to show that 
$$
\ex\left[
\prod_{a\in A } \left(\indi\left(x_{a} \sim x(\Lambda_a )\right)
+\varepsilon\rho(x_a)^{|\Lambda_a|}\right)\right]
\geq \rhoo^{|E(H)|},
$$
which is equivalent to
\beq{JHmain1}
\ex\left[
f_{_{\! T_r}}  
\prod_{a\in A}
\Big( \rhoo \rho(x_a)^{|\Lambda_a|-1} \ex[f_a| \Gamma (a; T_r) ]
\Big)\right]\geq \rhoo^{|E(H)|},
\eeq
provided $T$ is a tree on $A$ and $r
\in A$. 

Recall that  an independent set $U$ of a bipartite graph 
$H$ is 
tree-arrangeable if there is a tree $T$ on $U$ such that 
\beq{JHta}
\Lambda_{u}\cap\Lambda_{v}=\bigcap_{w\in P}\Lambda_{w}
~~~\mbox{for every path $P$ in $T$ connecting  $u$ and  $v$,}\eeq
and that a bipartite graph 
$H$  is tree-arrangeable 
if there exists a bipartition $A\cup B$ of $H$ such that $A$ is tree-arrangeable.

\mn 

The main lemma in this section  is 

\begin{LEM} \label{JHmain}
Suppose an independent set $U$ of a bipartite graph $H$ is  $T$-arrangeable for a tree
$T$  on $U$. 
Then, $f_{T_r}$ is root-invariant, that is,
$f_{T_r}=f_{T_s}$ for all $r,s \in U$.
Moreover, 
for  $u\in U$ and 
a random variable $g=g(x_u, x(\Lambda_u))$ determined by $x_u$ and $x(\Lambda_u)$, 
$$ 
 \ex [ gf_{T_r}  ] =\frac{\ex[gf_u ]}{1+\varepsilon},  
$$
regardless of the choice of $r\in U$. 
 In particular,  $\ex[f_{T_r}]=1 $.  
\end{LEM}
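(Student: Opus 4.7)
The plan is to introduce edge-indexed quantities $g_{\{a,b\}}$ that recast $f_{T_r}$ in a manifestly root-symmetric form, and then to establish the expectation identity by induction on $|U|$ via a leaf-peeling argument. For each edge $\{a,b\}\in E(T)$ set
\begin{equation*}
g_{\{a,b\}} := \ex_{x_a}\!\left[\frac{\indi(x_a \sim x(\Lambda_a \cap \Lambda_b))}{\rhoo\,\rho(x_a)^{|\Lambda_a \cap \Lambda_b|-1}}\right] + \varepsilon,
\end{equation*}
viewed as a function of $x(\Lambda_a \cap \Lambda_b)$. Averaging over $x_b$ in place of $x_a$ produces the identical expression, so $g_{\{a,b\}}$ is symmetric in its two labels; it is strictly positive because of the $\varepsilon$-regularization together with the assumption that $G$ has no isolated vertex.

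For root invariance, I would compute $\ex[f_a \mid \Gamma(a;T_r)]$ for $a\neq r$. Let $b$ be the neighbor of $a$ in $T$ lying on the $a$-to-$r$ path, and let $C$ be the component of $T\setminus\{a\}$ containing $r$. For every $v\in C$ the path from $a$ to $v$ in $T$ passes through $b$, so \eqref{intersection} forces $\Lambda_a\cap\Lambda_v \subseteq \Lambda_a\cap\Lambda_b$; since $b\in C$ itself attains equality,
\begin{equation*}
\bigcup_{v\in C}(\Lambda_a\cap\Lambda_v) \;=\; \Lambda_a\cap\Lambda_b.
\end{equation*}
Substituting this into \eqref{JH12} yields $\ex[f_a\mid\Gamma(a;T_r)]=g_{\{a,b\}}$, and combining with $\ex[f_r\mid\Gamma(r;T_r)]=1+\varepsilon$ gives the edge-product form
\begin{equation*}
f_{T_r} \;=\; \frac{1}{1+\varepsilon}\cdot\frac{\prod_{a\in U}f_a}{\prod_{\{a,b\}\in E(T)}g_{\{a,b\}}},
\end{equation*}
in which each undirected edge of $T$ contributes exactly once thanks to the symmetry of $g_{\{a,b\}}$. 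The right-hand side does not depend on $r$, establishing root invariance.

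For the identity $\ex[gf_{T_r}]=\ex[gf_u]/(1+\varepsilon)$, root invariance allows me to take $r=u$, and I would induct on $|U|$. The case $|U|=1$ is immediate from $f_{T_u}=f_u/(1+\varepsilon)$. For $|U|\geq 2$ pick a leaf $a\neq u$ of $T$ with unique neighbor $b$, and set $T'=T\setminus\{a\}$ on $U'=U\setminus\{a\}$. Deleting a leaf preserves paths between the remaining vertices, so $U'$ stays $T'$-arrangeable, and the parent of every $v\in U'$ is the same in $T_u$ and in $T'_u$; the edge-product form therefore factors as $f_{T_u} = (f_a/g_{\{a,b\}})\cdot f_{T'_u}$. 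The crucial observation is that, because $a$ is a leaf of $T_u$, the conditioning data $\Gamma(a;T_u)$ coincides with $(x_v,x(\Lambda_v))_{v\in U'}$, with respect to which both $g$ (using $u\in U'$) and $f_{T'_u}$ are measurable. Since $\ex[f_a\mid\Gamma(a;T_u)]=g_{\{a,b\}}$, the tower property gives
\begin{equation*}
\ex[gf_{T_u}] \;=\; \ex\!\left[gf_{T'_u}\cdot\ex\!\left[\frac{f_a}{g_{\{a,b\}}}\,\bigg|\,\Gamma(a;T_u)\right]\right] \;=\; \ex[gf_{T'_u}],
\end{equation*}
after which the inductive hypothesis applied to $T'$ (with the same $u$ and $g$) yields $\ex[gf_{T_u}]=\ex[gf_u]/(1+\varepsilon)$. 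Specializing to $g\equiv 1$ recovers $\ex[f_{T_r}]=1$.

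The main obstacle is the set identity $\bigcup_{v\in C}(\Lambda_a\cap\Lambda_v)=\Lambda_a\cap\Lambda_b$: this is precisely where the $T$-arrangeability hypothesis \eqref{intersection} is used, and without it the conditional expectation $\ex[f_a\mid\Gamma(a;T_r)]$ would not reduce to a function of $x(\Lambda_a\cap\Lambda_b)$ alone. The root-symmetric edge-product form would then be unavailable, and the leaf-peeling induction would fail because the factor $f_a/g_{\{a,b\}}$ to be integrated out after removing $a$ would still couple to vertices on the far side of $b$ through the remaining tree.
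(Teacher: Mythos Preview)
Your proof is correct and essentially the same as the paper's: both reduce $\ex[f_a\mid\Gamma(a;T_r)]$ to a quantity depending only on $x(\Lambda_a\cap\Lambda_b)$ via the set identity $\bigcup_{v\in C}(\Lambda_a\cap\Lambda_v)=\Lambda_a\cap\Lambda_b$ (the paper's Lemma~\ref{JHprop}), exploit its symmetry in $a,b$, and then prove the expectation identity by leaf-peeling induction after setting $r=u$. The only cosmetic difference is in the root-invariance step: the paper checks $f_{T_r}=f_{T_s}$ for adjacent $r,s$ by computing the ratio, whereas you write the closed edge-product formula $f_{T_r}=\frac{1}{1+\varepsilon}\prod_{a\in U}f_a\big/\prod_{\{a,b\}\in E(T)}g_{\{a,b\}}$, which is manifestly independent of $r$.
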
 
\mn
Once this  is established, one may prove Theorem \ref{thm:main_thm} using Jensen's inequality for logarithmic functions, as we have seen above. 

\mn

We first prove the following lemma. 
 
\begin{LEM}\label{JHprop} Suppose  $U$ is an independent set of  a bipartite graph $H$
and is $T$-arrangeable for a tree $T$ on $U$. 
Then the following hold.

\begin{enumerate}[{\em (i)}]
\item If $S$ is a subtree  of $T$, then $V(S)$ is $S$-arrangeable. 

\item  For each $u\in U$, each component $C$ of $T\setminus \{u\}$
and the vertex $u^*$ in $C$ adjacent to $u$, we have that  
\beq{JHequiv}\bigcup_{v\in C} (\Lambda_u \cap \Lambda_v)
=\Lambda_u \cap \Lambda_{u^*}. \eeq

\item For distinct vertices $u,r\in U$, 
let $u_{_r}$ be the parent 
of $u$ in the rooted tree $T_r$, or equivalently, 
$u_{_r}$ be the vertex adjacent to $u$ in the path in $T$ connecting $u$ and $r$. Then 
$$
\ex[f_u|\Gamma(u;T_r)]=\ex[f_u|x(\Lambda_u \cap\Lambda_{u_{_r}})]=
\ex_{x_u} \Big[ \frac{\indi( x_u \sim x(  \Lambda_u\cap \Lambda_{u_{_r}})) }{\rhoo \rho(x_u)^{| \Lambda_u\cap \Lambda_{u_{_r}}|-1}}\Big]+ \varepsilon
$$
where the expectation $\ex_{x_u}$ is taken over the uniform random vertex $x_u$ of $G$. 
 
\end{enumerate} 
\end{LEM}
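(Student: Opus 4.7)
I would handle the three parts in order, with (ii) carrying the only geometric content and (i), (iii) being essentially formal. Part (i) is immediate: the unique $u$-to-$v$ path in a subtree $S$ of $T$ is also the unique such path in $T$, so the identity $(\ref{intersection})$ for $T$ restricts verbatim to $S$. For (ii), the containment $\Lambda_u \cap \Lambda_{u^*} \subseteq \bigcup_{v \in C}(\Lambda_u \cap \Lambda_v)$ is obtained by taking $v = u^*$; for the reverse, fix any $v \in C$ and note that the unique $u$-to-$v$ path $P$ in $T$ must pass through $u^*$, since $u^*$ is the only neighbor of $u$ lying in $C$. Applying $(\ref{intersection})$ then gives $\Lambda_u \cap \Lambda_v = \bigcap_{w \in V(P)} \Lambda_w \subseteq \Lambda_u \cap \Lambda_{u^*}$, and taking union over $v \in C$ yields $(\ref{JHequiv})$.

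For (iii), I would use that the coordinates $\{x_w : w \in V(H)\}$ are i.i.d.\ uniform on $V(G)$ and that $f_u$ depends only on $(x_u, x(\Lambda_u))$. Since $U$ is independent in $H$, $u \notin \Lambda_v$ for every $v \in U$, so $x_u$ does not appear among the coordinates determined by $\Gamma(u; T_r) = (x_v, x(\Lambda_v))_{v \in C}$; likewise the coordinates $x(\Lambda_u \setminus \bigcup_{v \in C} \Lambda_v)$ are independent of $\Gamma(u; T_r)$. Hence the conditional expectation depends only on $x(\Lambda_u \cap \bigcup_{v \in C}\Lambda_v)$, which by (ii) equals $x(\Lambda_u \cap \Lambda_{u_r})$; this is the first equality. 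For the second, I would integrate out $x(\Lambda_u \setminus \Lambda_{u_r})$ first: conditionally on $x_u$, each such coordinate is adjacent to $x_u$ independently with probability $\rho(x_u)$, so
$\ex[\indi(x_u \sim x(\Lambda_u)) \mid x_u, x(\Lambda_u \cap \Lambda_{u_r})] = \indi(x_u \sim x(\Lambda_u \cap \Lambda_{u_r}))\cdot \rho(x_u)^{|\Lambda_u \setminus \Lambda_{u_r}|}$.
Dividing by $\rhoo\,\rho(x_u)^{|\Lambda_u|-1}$ produces the main term, while the $\varepsilon$-piece simplifies to $\varepsilon\, \ex_{x_u}[\rho(x_u)/\rhoo] = \varepsilon$, matching the claimed formula.

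I expect no serious obstacle beyond careful bookkeeping of conditional independence in (iii). The one point that warrants attention is well-definedness when $|\Lambda_u \cap \Lambda_{u_r}| = 0$: the formula then contains $\rho(x_u)^{-1}$, which is harmless because $G$ has no isolated vertex by the preceding lemma, and the convention $\indi(x_u \sim \emptyset) \equiv 1$ covers the empty intersection.
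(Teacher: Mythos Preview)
Your proof is correct and follows essentially the same route as the paper: (i) and (ii) are argued identically, and your treatment of (iii) is a fleshed-out version of what the paper compresses into the single line ``follows from \eqref{JH12} and (ii) as $u^*=u_{_r}$''. Your conditional-independence bookkeeping in (iii) is precisely the content of \eqref{JH12}, which the paper states earlier without detailed justification; so you have supplied that justification rather than taken a different approach.
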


\begin{proof} 
Since  a path in $S$ is also a path in $T$, 
\eqref{JHta} holds for every path $P$ in $S$. Thus (i) holds.
For (ii), clearly $\Lambda_u \cap \Lambda_{u^*}
\subseteq \bigcup_{v\in C} (\Lambda_u \cap \Lambda_v)$. 
On the other hand, for every vertex $v\in C$, the path
$P$  in $T$ connecting $u$ and $v$ must contain $u^*$ and hence 
$$\Lambda_u \cap \Lambda_v =
\bigcap_{w\in P} \Lambda_w \subseteq \Lambda_u \cap \Lambda_{u^*}. $$ 
Therefore, 
$$
\bigcup_{v\in C} (\Lambda_u \cap \Lambda_v)
\subseteq \Lambda_u \cap \Lambda_{u^*}.
$$
The equalities in (iii) follows from \eqref{JH12} and (ii)
as $u^* = u_{_r}$ in this case. 
\end{proof}

\mn 
{\bf Remark}. It is not difficult to show that 
 \eqref{JHequiv} is a necessary and sufficient condition for the $T$-arrangeability.

\mn 
\begin{proof}[Proof of Lemma \ref{JHmain}.]
For the first part, it is enough to show that $f_{_{\! T_r}} = f_{_{\! T_s}}$ for  all adjacent pairs $r,s$ in $T$. 
Suppose $r,s$  are adjacent in $T$.
If $u\not= r,s$, then 
$\ex[ f_u |\Gamma (u; T_r) ] =\ex[ f_u |\Gamma (u; T_s) ] $  since $s$ and $r$ are in the same component of $T\setminus \{u\}$. Thus, $\ex[f_r| \Gamma (r; T_r)]
=\ex[f_s| \Gamma (s; T_s)]=1+\varepsilon$ implies that
$$ 
\frac{f_{_{\! T_s}}}{f_{_{\! T_r}}}
=\frac{\ex[f_s| \Gamma(s; T_r)]}{\ex[f_r| \Gamma(r; T_s)]}. $$
As $s$ and $r$ are adjacent in $T$, (iii) of Lemma \ref{JHprop} gives
\[
\ex[f_s|\Gamma(s;T_r)]= \ex_{x_s} \Big[ \frac{\indi( x_s \sim x(  \Lambda_s\cap \Lambda_{r})) }{\rhoo \rho(x_s)^{| \Lambda_s\cap \Lambda_{r}|-1}}\Big]+ \varepsilon 
= \ex_{x_r} \Big[ \frac{\indi( x_r \sim x(  \Lambda_s\cap \Lambda_{r})) }{\rhoo \rho(x_r)^{| \Lambda_s\cap \Lambda_{r}|-1}}\Big]+ \varepsilon=\ex[f_r|\Gamma(r;T_s)]
.\]
Therefore,
$$ 
\frac{f_{_{\! T_r}}}{f_{_{\! T_s}}}
=\frac{\ex[f_s| \Gamma(s; T_r)]}{\ex[f_r| \Gamma(r; T_s)]}=1. $$

For the second part, we first have that 
$$ 
 \ex [ gf_{T_r}  ] =\ex  [ gf_{T_u}]
 . 
$$
If $|V(T)|=|U|=1$, then 
$\ex  [ gf_{T_u}]= \ex [ g \frac{f_u}{\ex[f_u]}] = \frac{ \ex [ g f_u]}{1+\varepsilon}$, as desired. 
Suppose $|V(T)|=|U| \geq 2$. Then,  
for a leaf $\ell$ of $T$ other than $u$ and the  tree 
$S =T \setminus \{\ell\}$, the set $U\setminus \{\ell\}$ is $S$-arrangeable 
by (i) of Lemma \ref{JHprop}, and, for $v\in U\setminus \{\ell\}$ with $v\not=u$, it follows from   (iii) of Lemma \ref{JHprop} that 
$$
\ex[ f_v | \Gamma(v; S_u) ] =
\ex[ f_v | x(\Lambda_v \cap \Lambda_{v_u}) ]
= \ex[ f_v | \Gamma(v; T_u) ], $$
where $v_u$ is the parent of $v$ in $S_u$, as   $v_u$ is also the parent of $v$ in $T_u$. Therefore,
$\ex[ f_u| \Gamma(u; T_u)] =\ex[ f_u| \Gamma(u; S_u)] $,
and
$$ f_{T_u} = \frac{ f_{S_u} f_\ell}{\ex[ f_\ell| \Gamma(\ell; T_u)]}. $$
Hence
$$
 \ex\left[ g f_{T_u }\right]=
 \ex\bb[ 
\ex\bb[\frac{g f_{S_u}f_\ell}
     {\ex[f_\ell|\Gamma(\ell;T_u)] }\bb\vert  \Gamma(\ell; T_u)\bb] \bb]=
 \ex\bb[ g f_{S_u}
\ex\bb[\frac{f_\ell}
     {\ex[f_\ell| \Gamma(\ell; T_u)] }\bb\vert  \Gamma(\ell; T_u)\bb] \bb]=
 \ex\left[ g f_{S_u }\right],
$$
as both of $g$ and $f_{S_u}$ are determined by 
 $ \Gamma(\ell; T_u)$. Keep deleting vertices  of $T_u$ by the same way, we eventually have 
 $$ 
\ex  [ gf_{T_r}]=\ex [ g f_{T_u }]=\ex \bb[ g \frac{f_u}{\ex[f_u]}\bb]
=\frac{\ex [ g f_u]}{1+\varepsilon} ,$$
as desired. By taking $g\equiv 1$, we have that 
$$\ex  [ f_{T_r}]
=\frac{\ex [ f_u]}{1+\varepsilon}=1.\qedhere $$
\end{proof}

\old{\begin{proof}
For the other direction, we use the induction 
on the length $|P|$ of paths $P$, where the length of a path is the number of edges in it. 
For a path of length $1$, there is nothing to check. 
Suppose 
$$ \bigcap_{u\in P} \Lambda_u = \Lambda_{u_{_1}}
\cap \Lambda_{u_{_t}} $$
for all paths $P=u_{_1}, u_{_2}, ..., u_{_{t}}$ in $T$, $t\geq 2$.
If $P=u_{_1}, u_{_2}, ..., u_{_{t+1}}$,  then by the induction hypothesis 
$$ \bigcap_{u\in P} \Lambda_u = \Lambda_{u_{_1}}
\cap \Lambda_{u_{_t}} \cap \Lambda_{u_{_{t+1}}}. $$
For the connected component $C$ of $T\setminus \{ u_{_{t+1}}\}$ containing $u_{_1}$, \eqref{JHequiv} gives
$$
\bigcup_{v\in C} (\Lambda_{u_{_{t+1}}} \cap \Lambda_v)
=\Lambda_{u_{_{t+1}}} \cap \Lambda_{u_{_{t}}}.
$$
Since 
$ \Lambda_{u_{_{1}}} \cap \Lambda_{u_{_{t+1}}}
\subseteq
\bigcup_{v\in C} (\Lambda_{u_{_{t+1}}} \cap \Lambda_v)$,
we have that $$ \Lambda_{u_{_{1}}} \cap \Lambda_{u_{_{t+1}}}
\subseteq  \Lambda_{u_{_{t}}} \cap \Lambda_{u_{_{t+1}}}
$$
and hence 
$$ \Lambda_{u_{_{1}}} \cap \Lambda_{u_{_{t+1}}}
=  \Lambda_{u_{_{1}}} 
\cap \Lambda_{u_{_{t}}} \cap \Lambda_{u_{_{t+1}}}
=\bigcap_{u\in P} \Lambda_u . 
$$
\end{proof} }

\mn

\mn

We are now ready to prove  Theorem \ref{thm:main_thm}:
\begingroup
\def\thetheorem{\ref{thm:main_thm}}
\begin{theorem} (Restated) 
If a bipartite graph $H$ is tree-arrangeable, then $H$ has Sidorenko's property.
\end{theorem}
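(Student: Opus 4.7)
The plan is to establish the inequality \eqref{JHmain1}, from which Theorem \ref{thm:main_thm} follows by letting $\varepsilon \to 0$ via the dominated convergence argument already indicated. I fix a bipartition $A \cup B$ of $H$ such that $A$ is $T$-arrangeable for some tree $T$ on $A$, pick any root $r \in A$, and assume (by the preceding lemma) that $G$ has no isolated vertex, so every $f_a$ is well defined. The key structural input, supplied by Lemma \ref{JHmain}, is twofold: $f_{T_r}$ is a nonnegative random variable of mean one and hence the density of a probability measure, and $\ex[g\,f_{T_r}] = \ex[g\,f_a]/(1+\varepsilon)$ whenever $g$ is determined by $(x_a, x(\Lambda_a))$.

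Writing $Y := \prod_{a \in A} \rhoo\,\rho(x_a)^{|\Lambda_a|-1}\,\ex[f_a \mid \Gamma(a;T_r)]$, the first step is to apply Jensen's inequality to the concave function $\ln$ under the density $f_{T_r}$:
\[
\ln \ex[f_{T_r} Y] \;\ge\; \ex[f_{T_r} \ln Y] \;=\; |A|\ln \rhoo + \sum_{a \in A}(|\Lambda_a|-1)\,\ex[f_{T_r} \ln \rho(x_a)] + \sum_{a \in A}\ex[f_{T_r} \ln \ex[f_a \mid \Gamma(a;T_r)]].
\]
Both $\ln \rho(x_a)$ and, by Lemma \ref{JHprop}(iii), $\ln \ex[f_a \mid \Gamma(a;T_r)]$ (which depends on $x$ only through $x(\Lambda_a \cap \Lambda_{a_{_r}}) \subseteq x(\Lambda_a)$) are functions of $(x_a, x(\Lambda_a))$, so the localization property above converts each summand $\ex[f_{T_r}\,\cdot\,]$ into $\ex[f_a\,\cdot\,]/(1+\varepsilon)$.

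The second step is to apply Jensen's inequality a second time, now to the convex function $t \ln t$, to bound the two kinds of localized terms. Conditioning on $x_a$ gives $\ex[f_a \mid x_a] = (1+\varepsilon)\rho(x_a)/\rhoo$, hence $\ex[f_a \ln \rho(x_a)] = \frac{1+\varepsilon}{\rhoo}\,\ex[\rho(x_a)\ln \rho(x_a)] \ge (1+\varepsilon)\ln \rhoo$ by convexity. Setting $Z_a := \ex[f_a \mid \Gamma(a;T_r)]$, the tower property yields $\ex[f_a \ln Z_a] = \ex[Z_a \ln Z_a] \ge (1+\varepsilon)\ln(1+\varepsilon)$ since $\ex[Z_a] = \ex[f_a] = 1+\varepsilon$. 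Summing and using $\sum_{a \in A}(|\Lambda_a|-1) = |E(H)| - |A|$ yields
\[
\ex[f_{T_r}\ln Y] \;\ge\; |E(H)|\ln \rhoo + |A|\ln(1+\varepsilon) \;\ge\; |E(H)|\ln \rhoo,
\]
which upon exponentiating is exactly \eqref{JHmain1}.

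The main obstacle has already been absorbed into Lemma \ref{JHmain}: the tree-arrangeability condition \eqref{JHta} is precisely what forces both the root-invariance of $f_{T_r}$ and the localization identity $\ex[g\,f_{T_r}] = \ex[g\,f_a]/(1+\varepsilon)$, and it is what collapses $\ex[f_a \mid \Gamma(a;T_r)]$ to a function of $x(\Lambda_a \cap \Lambda_{a_{_r}})$ alone. Without this collapse the localization step could not be applied uniformly over $a \in A$, and the two applications of Jensen's inequality would not combine cleanly to give the exponent $|E(H)|$ on the right-hand side; granted the structural lemmas, the remainder of the argument is bookkeeping.
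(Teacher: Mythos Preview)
Your proposal is correct and follows essentially the same route as the paper: you invoke Lemma~\ref{JHmain} to treat $f_{T_r}$ as a probability density, apply Jensen for $\ln$ to pass to $\ex[f_{T_r}\ln Y]$, use the localization identity to replace $f_{T_r}$ by $f_a/(1+\varepsilon)$ in each summand, and then bound the $\rho(x_a)$-term and the $\ex[f_a\mid\Gamma(a;T_r)]$-term separately via the convexity of $t\ln t$. The only cosmetic difference is that you keep track of the extra $|A|\ln(1+\varepsilon)\ge 0$ whereas the paper simply drops it; the argument is otherwise identical.
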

\addtocounter{theorem}{-1}
\endgroup

\begin{proof} Let $H$ be a bipartite graph with bipartition $A\cup B$ and let $A$ be $T$-arrangeable for a tree $T$ on $A$. As seen earlier in \eqref{JHmain1}, it suffices to show that, for a fixed vertex $r\in A$,  
$$
\ex\left[
f_{_{\! T_r }}  
\prod_{a\in A}
\Big( \rhoo \rho(x_a)^{|\Lambda_a|-1} \ex[f_a| \Gamma (a; T_r) ]
\Big)\right]\geq \rhoo^{|E(H)|}.
$$
Since $\ex[ f_{T_r}] =1$, Jensen's inequality gives
\bean 
\ln \ex\bb[
f_{_{\! T_r }}  
\prod_{a\in A}
\Big( \rhoo \rho(x_a)^{|\Lambda_a|-1} \ex[f_a| \Gamma (a; T_r) ]
\Big)\bb]
&\geq&  \ex\bb[
f_{_{\! T_r }}  
\ln \prod_{a\in A}
\Big( \rhoo \rho(x_a)^{|\Lambda_a|-1} \ex[f_a| \Gamma (a; T_r) ]
\bb) \bb] 
\eean 
The right hand side is 
$$ 
|A| \ex  [ f_{T_r} \ln \rhoo ] + \sum_{a\in A}(|\Lambda_a|-1)
\ex[ f_{T_r} \ln \rho (x_a)] +
\sum_{a\in A} \ex\bb[
f_{_{\! T_r }}  
\ln  \ex[f_a| \Gamma (a; T_r) ]
 \bb] .
$$

First, as $\ex  [ f_{T_r} ]=1$,
\beq{JHfirst} 
\ex  [ f_{T_r} \ln \rhoo ]= \ln\rhoo
\eeq	
Second, since  $\ln \rho (x_a)$ is determined 
by $x_a$, Lemma \ref{JHmain} 
together with the same argument used in \eqref{JH10} gives 
$$ \ex[ f_{T_r} \ln \rho (x_a)]
=\frac{\ex[ f_{a} \ln \rho (x_a)]}{1+\eps}
= \rhoo^{-1} \ex[ \rho (x_a)\ln \rho (x_a)]. $$
Jensen's inequality further gives 
\beq{JHsecond}
\ex[ f_{T_r} \ln \rho (x_a)]
=\rhoo^{-1} \ex[ \rho (x_a)\ln \rho (x_a)]
\geq   \rhoo^{-1} \ex[ \rho (x_a)]\ln \ex[\rho (x_a)] 
=\ln \rhoo. 
\eeq

Third, as $\ex[f_a| \Gamma (a; T_r) ]$ is determined by 
$x(a)$ and $x(\Lambda_a)$, Lemma \ref{JHmain} yields 
\bean 
\ex\bb[
f_{_{\! T_r }}  
\ln  \ex[f_a| \Gamma (a; T_r) ]
 \bb]& =&\frac{1}{1+\varepsilon}
 \ex\bb[
f_{a}  
\ln  \ex[f_a| \Gamma (a; T_r) ]
 \bb] \\
 &=& \frac{1}{1+\varepsilon}\ex\bb[\ex\bb[ 
f_{a}  
\ln  \ex[f_a| \Gamma (a; T_r)] \bb| \Gamma (a; T_r) \bb]
\bb] \\
&=& \frac{1}{1+\varepsilon}\ex\bb[\ex[f_a| \Gamma (a; T_r)]   
\ln  \ex[f_a| \Gamma (a; T_r)]  \bb].
\eean 
Applying  Jensen's inequality for the convex function 
$z\ln z$, and using $\ex\bb[\ex[f_a| \Gamma (a; T_r)] \bb]=\ex[f_a] =1+\eps$, we have that 
\bea{JHthird} 
\ex\bb[
f_{_{\! T_r }}  
\ln  \ex[f_a| \Gamma (a; T_r) ]
 \bb]
 &\geq&\frac{1}{1+\varepsilon} 
   \ex\bb[\ex[f_a| \Gamma (a; T_r)] \bb] \ln \ex\bb[\ex[f_a| \Gamma (a; T_r)] \bb] \nonumber \\
   & =& \ln (1+\eps)\geq 0. 
\eea 
Combining \eqref{JHfirst}-\eqref{JHthird},
we have that 
$$ \ln \ex\bb[
f_{_{\! T_r }}  
\prod_{a\in A}
\Big( \rhoo \rho(x_a)^{|\Lambda_a|-1} \ex[f_a| \Gamma (a; T_r) ]\Big)\bb]
\geq 
|A|\ln \rhoo+ \sum_{a\in A} |(\Lambda_a|-1)\ln \rhoo = |E(H)| \ln \rhoo, $$
or equivalently, 
$$
\ex\bb[
f_{_{\! T_r }}  
\prod_{a\in A}
\Big( \rhoo \rho(x_a)^{|\Lambda_a|-1} \ex[f_a| \Gamma (a; T_r) ]
\Big)\bb]
\geq \rhoo^{|E(H)|},$$
as desired. 
\end{proof}

\section{Cartesian products}\label{sec:recursive}

Recall that the Cartesian product $H_1 \sq H_2$ of two graphs $H_1$ and $H_2$
is defined as the graph on $V(H_1)\times V(H_2)$ 
such that two vertices $(u_1,u_2)$ and $(v_1,v_2)$ are adjacent if and only if 
either (i) $u_1$ and $v_1$ are adjacent in $H_1$ and $u_2=v_2$,
or (ii) $u_2$ and $v_2$ are adjacent in $H_2$ and $u_1=v_1$.
In this section we prove Theorem \ref{thm:box_product}, which 
is  restated for reader's convenience.

\begingroup
\def\thetheorem{\ref{thm:box_product}}
\begin{theorem} (Restated) 
If $T$ is a tree and $H$ is a bipartite graph having Sidorenko's property, 
then $T \sq H$ also has Sidorenko's property.
\end{theorem}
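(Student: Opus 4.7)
The plan is to proceed by induction on $|V(T)|$. The base case $|V(T)|=1$ is immediate since $T\sq H=H$, which has Sidorenko's property by hypothesis. For the inductive step I would pick a leaf $\ell$ of $T$ with neighbor $\ell'$ and set $T':=T-\ell$, so that by the inductive hypothesis the analytic form for $T'\sq H$ is at least $\rho_0^{|E(T'\sq H)|}$ for any non-negative symmetric measurable $W\colon[0,1]^2\to[0,\infty)$, where $\rho_0:=\int W\,d\mu^2$.

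Working in the analytic form, for each $v\in V(T)$ I would introduce $X_v=(x_{v,u})_{u\in V(H)}\in[0,1]^{V(H)}$ and abbreviate $\phi(X):=\prod_{ab\in E(H)}W(x_a,x_b)$ and $\kappa(X,Y):=\prod_{u\in V(H)}W(x_u,y_u)$, so that the analytic form of $T\sq H$ becomes
\[
  \int \prod_{v\in V(T)}\phi(X_v)\prod_{vw\in E(T)}\kappa(X_v,X_w)\,\prod_v dX_v.
\]
Integrating out $X_\ell$ converts this into the corresponding integral over $V(T')$ with an additional vertex weight $\psi(X_{\ell'})$ inserted at $\ell'$, where
\[
  \psi(Y):=\int \phi(X)\,\kappa(X,Y)\,dX
\]
is itself the analytic form of the bipartite graph $H^+$ obtained from $H$ by attaching a pendant edge to every vertex of $H$, viewed as a function of the free pendant endpoints~$Y$.

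Since $|E(T\sq H)|-|E(T'\sq H)|=|E(H)|+|V(H)|$, the inductive hypothesis reduces the theorem to
\[
  \int \psi(X_{\ell'})\,d\nu \;\geq\; \rho_0^{|E(H)|+|V(H)|},
\]
where $\nu$ is the probability measure on $[0,1]^{V(T')\times V(H)}$ proportional to $\prod_v \phi(X_v)\prod_{vw\in E(T')}\kappa(X_v,X_w)$. By Jensen's inequality for the concave function $\log$, it suffices to prove $\int \log\psi(X_{\ell'})\,d\nu \geq (|E(H)|+|V(H)|)\log \rho_0$. Writing $\psi(Y)=\prod_u \rho(y_u)\cdot\int \phi(X)\prod_u [W(x_u,y_u)/\rho(y_u)]\,dX$, where $\rho(y):=\int W(x,y)\,dx$, and invoking Sidorenko's property of $H$ for the suitably tilted measure yields a lower bound on $\log\psi(Y)$; the residual $\log\rho(y_u)$ terms are controlled after averaging against $\nu$ by further Jensen bounds combined with Sidorenko's property of small auxiliary trees (such as $P_3$, which bounds $\int W(x,y)\rho(x)\rho(y)\,dx\,dy$ from below by $\rho_0^3$).

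The principal obstacle is that under $\nu$ the components $(x_{\ell',u})_u$ are not independent, so Sidorenko's property cannot be applied directly with a pure product measure. I would resolve this by introducing a further normalization by conditional expectations propagating along $T'$, mirroring the $f_{T_r}$ machinery of Section~\ref{sec:proof_mainthm}: this converts the global inequality into a chain of single-edge inequalities each directly handled by Jensen's inequality or by Sidorenko's property of $H$ applied with a simpler measure.
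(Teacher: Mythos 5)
Your approach is genuinely different from the paper's, and while the high-level skeleton is coherent, there are several gaps that I don't believe can be closed in the way you sketch.

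Your reduction is clean: integrating out $X_\ell$ does factor the total integral as $Z \cdot \ex_\nu[\psi(X_{\ell'})]$ with $Z \ge \rhoo^{|E(T'\sq H)|}$ by the inductive hypothesis, so it would suffice to prove $\ex_\nu[\psi(X_{\ell'})] \ge \rhoo^{|E(H)|+|V(H)|}$. The identification of $\psi$ with the analytic form of ``$H$ with pendants'' is also correct. The problems lie in that remaining inequality.

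First, the base of the induction (after the trivial $|V(T)|=1$ step) is $|V(T)|=2$, where $T'$ is a single vertex and $\nu \propto \phi(Y)\,dY$. There the target $\int\psi\,d\nu \ge \rhoo^{|E(H)|+|V(H)|}$ is equivalent to $\int\phi(Y)\psi(Y)\,dY \ge \rhoo^{2|E(H)|+|V(H)|}$, which is word-for-word the assertion that $K_2\sq H$ has Sidorenko's property. So the induction does not reduce the difficulty: the first nontrivial step already requires the full $K_2$-case of the theorem, and your sketch offers no separate proof of it. The paper isolates this step as Theorem~\ref{thm:simple_case} and proves it via the auxiliary graph $\psi_{K_2}(G)$, the exact count $|V(\psi_{K_2}(G))| = |V(G)|^2 t_{K_2}(G)$, and Sidorenko's property of $C_4 = K_2\sq K_2$; nothing analogous appears in your outline.

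Second, the proposed Jensen step $\log \ex_\nu[\psi] \ge \ex_\nu[\log\psi]$ is hazardous: $\psi(Y)$ can vanish on a set of positive $\nu$-measure (take $W$ supported on a bipartite pattern and $Y$ on the wrong side), making the right-hand side $-\infty$. Section~\ref{sec:proof_mainthm} deals with the analogous degeneracy by adding an $\varepsilon$-perturbation to the indicator before taking logarithms; you would need a comparable device, and it is not obvious it interacts cleanly with the rest of your scheme.

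Third, and most seriously, your plan to ``resolve the non-independence by introducing a further normalization mirroring the $f_{T_r}$ machinery of Section~\ref{sec:proof_mainthm}'' does not transfer. That machinery hinges on the underlying independent set being $T$-arrangeable, i.e., on the intersection-of-neighborhoods condition \eqref{intersection}. Here the relevant bipartite graph is $T'\sq H$, which is not tree-arrangeable in general, and the random vector $X_{\ell'}$ is a block of $|V(H)|$ coordinates whose $\nu$-marginal does not factorize over $V(H)$. The tilted measure $\prod_u W(x_u,y_u)\rho(y_u)^{-1}\,dx_u$ you introduce is a product of distinct non-uniform measures, and Sidorenko's property in the form \eqref{eq:analytic_form} requires a common base measure on both sides, so it cannot be invoked directly.

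By contrast, the paper circumvents all of these coupling issues by working globally: Lemma~\ref{lem:1-1corresp} gives the bijection $\Hom(T\sq H,G) \cong \Hom(H,\psi_T(G))$, Sidorenko's property of $H$ is then applied in the auxiliary graph $\psi_T(G)$, and the needed \emph{upper} bound on $|V(\psi_T(G))|=|\Hom(T,G)|$ is obtained by first reducing to graphs of bounded maximum degree (Lemma~\ref{lem:sido_maxdegree}, together with the tensor trick Lemma~\ref{lem:tensor}) and then using greedy extension along the tree $T$. This is where the treeness of $T$ is used, and it is the ingredient your outline most conspicuously lacks. If you want to salvage the inductive approach you would need a genuine argument for $\ex_\nu[\psi(X_{\ell'})]\ge\rhoo^{|E(H)|+|V(H)|}$ that does not presuppose the $K_2$ case of the theorem; I do not see one.
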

\addtocounter{theorem}{-1}
\endgroup

The following alternative description of the graph $T \sq H$ 
provides insight to the proof of Theorem \ref{thm:box_product}.
Let $T_1, \cdots,T_{|V(H)|}$ be vertex-disjoint copies of the graph $T$. For each edge $\{a,b\}$ of $H$, place an edge
between the copy of each $v\in V(T)$ in $T_a$ and $T_b$
so that $T_a$ and $T_b$ together form a copy of $T \sq K_2$. It is not too difficult to check
that the resulting graph is $T \sq H$.

We wish to count the number of homomorphisms
from $T \sq H$ to a given graph $G$, through
counting the number of homomorphisms from
$H$ to an auxiliary graph constructed from $G$.
For each vertex $v$ of $H$, there 
exists a copy $T_v$ of $T$ in $T \sq H$ over the 
vertices $V(T) \times \{v\}$. 
Moreover, as seen above, for each edge $e = \{v,w\}$
of $H$, the two copies of $T_v$ and $T_w$ form 
a copy of $T \sq K_2$ in $T \sq H$ (see Figure \ref{TH}). 
Thus, a copy of $T \sq K_2$ in $G$ needs to be contracted into 
an edge in the desired auxiliary graph of $G$.
This motivates the following definition
of the operation $\psi_T$ on $G$.

\begin{figure}
 \centering
    \includegraphics[width=0.7\textwidth]{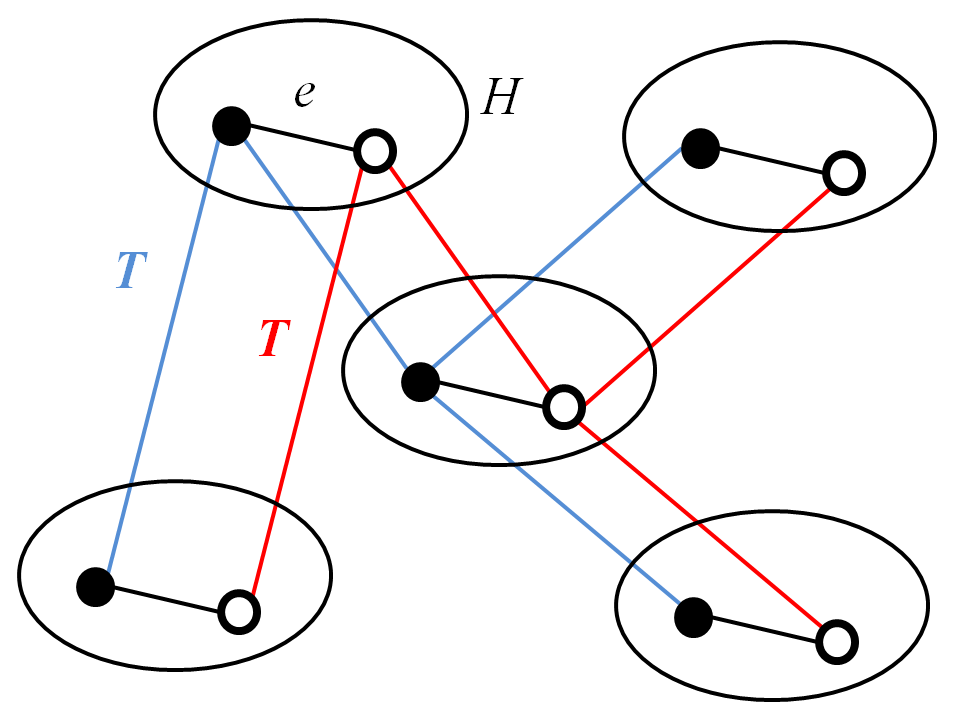}
    \caption{ $T \mbox{
    {\tiny $\!\! \square \,$}} K_2$ in 
    $T \mbox{
    {\tiny $\!\!  \square \,$}} H$.}\label{TH}
\end{figure}

\begin{DEF}
For given graphs $T$ and $G$, let $\psi_T(G)$ be the graph
with vertex set $\Hom(T,G)$ such that two vertices 
$h_1, h_2 \in \Hom(T,G)$ are adjacent if and only if
$h_1(v)$ and $h_2(v)$ are adjacent in $G$ for all $v \in V(T)$.
\end{DEF}

The observation above essentially is
equivalent to saying that a copy of $T \sq H$ in $G$
can be mapped to a copy of $H$ in $\psi_T(G)$, and
the following lemma formalizes this intuition.

\begin{LEM}\label{lem:1-1corresp}
For all graphs $T$, $H$ and $G$,
there exists a one-to-one correspondence between
$\Hom(T \sq H, G)$ and $\Hom(H, \psi_T(G))$.  
In particular,
\[
	|\Hom(T \sq H, G)| = |\Hom(H, \psi_T(G))|.
\]
\end{LEM}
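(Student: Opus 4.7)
The plan is to construct an explicit bijection and verify it preserves the homomorphism property in both directions. The intuition is already suggested by the alternative description of $T \sq H$ as $|V(H)|$ disjoint copies $T_v$ of $T$ with inter-copy edges encoding $H$: a homomorphism $\phi\colon T \sq H \to G$ is essentially a family of ``parallel'' copies of $T$ in $G$, indexed by $V(H)$, that are compatible edge-by-edge whenever the indices are adjacent in $H$. This is exactly the data of a homomorphism from $H$ to $\psi_T(G)$.

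First I would define the forward map. Given $\phi \in \Hom(T \sq H, G)$, for each $v \in V(H)$ set $h_v\colon V(T) \to V(G)$ by $h_v(t) = \phi(t, v)$. Restricting $\phi$ to the subgraph induced on $V(T) \times \{v\}$, which is a copy of $T$ inside $T \sq H$, shows that $h_v \in \Hom(T, G) = V(\psi_T(G))$. Define $\Phi(v) := h_v$. To verify $\Phi \in \Hom(H, \psi_T(G))$, take $\{v,w\} \in E(H)$; then $\{(t,v),(t,w)\}$ is an edge of $T \sq H$ for every $t \in V(T)$, so $h_v(t) = \phi(t,v)$ and $h_w(t) = \phi(t,w)$ are adjacent in $G$ for every $t$, which by the definition of $\psi_T(G)$ says exactly that $h_v$ and $h_w$ are adjacent in $\psi_T(G)$.

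Next I would define the inverse. Given $\Phi \in \Hom(H, \psi_T(G))$, set $\phi(t,v) := \Phi(v)(t)$. There are two types of edges in $T \sq H$ to check. For an edge $\{(t,v),(t',v)\}$ with $\{t,t'\} \in E(T)$, the images $\Phi(v)(t)$ and $\Phi(v)(t')$ are adjacent in $G$ because $\Phi(v) \in \Hom(T, G)$. For an edge $\{(t,v),(t,w)\}$ with $\{v,w\} \in E(H)$, the images $\Phi(v)(t)$ and $\Phi(w)(t)$ are adjacent in $G$ by the definition of adjacency in $\psi_T(G)$ applied to the edge $\{\Phi(v), \Phi(w)\}$.

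Finally, the two assignments $\phi \mapsto \Phi$ and $\Phi \mapsto \phi$ are manifestly inverse to each other, since each is determined pointwise by the rule $\phi(t,v) = \Phi(v)(t)$. Hence $|\Hom(T \sq H, G)| = |\Hom(H, \psi_T(G))|$. There is no real obstacle here; the only thing requiring care is bookkeeping—making sure both types of edges in the Cartesian product are checked in the inverse direction, and being explicit that the vertex set of $\psi_T(G)$ really is $\Hom(T,G)$ so that evaluation $\Phi(v)(t)$ makes sense.
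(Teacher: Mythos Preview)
Your proposal is correct and follows essentially the same approach as the paper: both construct the explicit bijection $\phi(t,v)\leftrightarrow \Phi(v)(t)$ and verify that each direction is a homomorphism by checking the two types of edges in $T\sq H$ and unwinding the definition of adjacency in $\psi_T(G)$. If anything, your version is slightly cleaner, as you note directly that the two assignments are mutually inverse by construction, whereas the paper checks only one composite explicitly.
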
 
\begin{proof}
We will define $\xi : \Hom(T \sq H, G) \rightarrow \Hom(H, \psi_T(G))$
and $\varphi : \Hom(H, \psi_T(G)) \rightarrow \Hom(T \sq H, G) $
such that $\xi \circ \varphi = id$.

For a given $h \in \Hom(T \sq H, G)$, for 
each $v \in V(H)$, define $h_v : V(T) \rightarrow V(G)$ as
$h_v(w) = h(w,v)$ for each $w \in V(T)$. 
Whenever
$w,w' \in V(T)$ are adjacent, the vertices
$h_v(w) = h(w,v)$ and $h_v(w') = h(w',v)$
are adjacent. Thus $h_v \in \Hom(T, G)$
for all $v \in V(H)$.
Moreover, 
if $v, v'$ are adjacent vertices of $H$, then
$h_v(w) = h(w,v)$ and $h_{v'}(w) = h(w,v')$ are adjacent, 
and thus $h_v$ and $h_v'$ are adjacent in $\psi_T(G)$. 
Hence if we let $\xi(h) \,:\, V(H) \rightarrow \Hom(T, G)$ 
be defined by $\xi(h)(v) = h_v$, then $\xi$ is a map from
$\Hom(T \sq H, G)$ to $\Hom(H, \psi_T(G))$.

On the other hand, given a map $g \in \Hom(H, \psi_T(G))$, define
$\varphi(g) : V(T) \times V(H) \rightarrow V(G)$ as
$\varphi(g)(w,v) = g(v)(w)$ for each $v \in V(H)$ and $w \in V(T)$.
We first prove that $\varphi(g) \in \Hom(T \sq H, G)$.
For edges of the form $\{(w,v), (w',v)\}$,
$\varphi(g)(w,v) = g(v)(w)$ and $\varphi(g)(w',v) = g(v)(w')$ are adjacent
since $g(v) \in V(\psi_T(G)) = \Hom(T, G)$.
For edges of the form $\{(w, v), (w, v')\}$, we have
$\varphi(g)(w,v) = g(v)(w)$ and $\varphi(g)(w,v') = g(v')(w)$,
and these two vertices are adjacent in $G$ since
$g(v)$ and $g(w)$ are adjacent in $\psi_T(G)$. Hence we established
that $\varphi(G) \in \Hom(T \sq H, G)$.

It suffices to prove the $\xi \circ \varphi = id$. 
This follows from the fact that
for $h \in \Hom(T \sq H, G)$, $v \in V(H)$ and $w \in V(T)$,
\[
	\left((\varphi \circ \xi)(h)\right)(w,v) = h_v(w) = h(w,v),
\]
for the map $h_v$ defined as above.
\end{proof}

By Lemma \ref{lem:1-1corresp}, we can now estimate the 
size of $\Hom(T \sq H, G)$ 
through estimating the size of $\Hom(H, \psi_T(G))$, 
where Sidorenko's property of $H$ provides 
a lower bound on the size of $\Hom(H, \psi_T(G))$. 
We can use this idea to show the simplest case of 
Theorem~\ref{thm:box_product}, i.e., when $T=K_2$.
Here we give a full proof of this simple case, as 
the result will be used in the proof of Theorem \ref{thm:box_product}.
 
\begin{THM} \label{thm:simple_case}
If $H$ is a bipartite graph having Sidorenko's property, 
then $K_2 \sq H$ has Sidorenko's property.
\end{THM}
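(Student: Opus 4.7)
The plan is to apply Lemma~\ref{lem:1-1corresp} with $T=K_2$, which converts the problem of counting homomorphisms from $K_2 \sq H$ into $G$ into the problem of counting homomorphisms from $H$ into the auxiliary graph $\psi_{K_2}(G)$, so that Sidorenko's property of $H$ can be invoked on the latter. The remaining work is purely bookkeeping: express $t_{K_2 \sq H}(G)$ in terms of $t_H(\psi_{K_2}(G))$, and express the normalizing quantity $t_{K_2}(\psi_{K_2}(G))$ in terms of $t_{C_4}(G)$, then feed Sidorenko's property for $K_{2,2}=C_4$ (a complete bipartite graph, hence known to have Sidorenko's property) into the resulting inequality.

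First I would unpack $\psi_{K_2}(G)$. Its vertex set $\Hom(K_2,G)$ consists of ordered pairs $(u_1,u_2)$ with $u_1u_2 \in E(G)$, so $|V(\psi_{K_2}(G))| = 2|E(G)| = n^2\rhoo$, where $n=|V(G)|$. Two vertices $(u_1,u_2)$ and $(v_1,v_2)$ are adjacent in $\psi_{K_2}(G)$ exactly when both $u_1v_1$ and $u_2v_2$ lie in $E(G)$, so ordered pairs of adjacent vertices in $\psi_{K_2}(G)$ are in bijection with elements of $\Hom(C_4,G)$; in particular $2|E(\psi_{K_2}(G))| = |\Hom(C_4,G)|$.

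Next I would translate the two homomorphism densities. From Lemma~\ref{lem:1-1corresp},
\begin{equation*}
t_H(\psi_{K_2}(G)) \,=\, \frac{|\Hom(K_2 \sq H,G)|}{(n^2\rhoo)^{|V(H)|}} \,=\, \frac{t_{K_2 \sq H}(G)}{\rhoo^{|V(H)|}},
\end{equation*}
and similarly $t_{K_2}(\psi_{K_2}(G)) = t_{C_4}(G)/\rhoo^2$. Sidorenko's property for $H$, applied to the graph $\psi_{K_2}(G)$, then yields
\begin{equation*}
\frac{t_{K_2 \sq H}(G)}{\rhoo^{|V(H)|}} \;\ge\; \left(\frac{t_{C_4}(G)}{\rhoo^{2}}\right)^{|E(H)|}.
\end{equation*}

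Finally, since $C_4 = K_{2,2}$ is complete bipartite, Sidorenko's property gives $t_{C_4}(G) \ge \rhoo^4$, and combining with the previous display produces $t_{K_2 \sq H}(G) \ge \rhoo^{\,2|E(H)|+|V(H)|}$. A direct count shows $|E(K_2 \sq H)| = 2|E(H)| + |V(H)|$ (two parallel copies of each edge of $H$, plus one $K_2$-edge per vertex of $H$), so this is precisely Sidorenko's inequality for $K_2 \sq H$. I do not expect a genuine obstacle here; the only delicate point is the normalization step that converts Sidorenko's inequality on the auxiliary graph $\psi_{K_2}(G)$ back to one on $G$, and the crucial structural input is that the ``edge density'' of $\psi_{K_2}(G)$ is governed by a $4$-cycle count in $G$, which is itself controlled by Sidorenko applied to $C_4$.
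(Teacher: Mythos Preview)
Your proposal is correct and follows essentially the same approach as the paper: both use Lemma~\ref{lem:1-1corresp} to identify $\Hom(K_2\sq H,G)$ with $\Hom(H,\psi_{K_2}(G))$, apply Sidorenko's property of $H$ on $\psi_{K_2}(G)$, recognize that the edge density of $\psi_{K_2}(G)$ is a $C_4$-count in $G$, and close with Sidorenko for $C_4$. The only difference is cosmetic---you work throughout with normalized densities $t_H$, while the paper carries the raw homomorphism counts and normalizes at the end.
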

\begin{proof}
Let $G$ be a given graph and put $\psi(G) = \psi_{K_2}(G)$ for simplicity.
By Lemma~\ref{lem:1-1corresp} and the fact that $H$ has
Sidorenko's property, we have
\begin{align} \label{eq:corresp}
	|\Hom(K_2 \sq H, G)| &= |\Hom(H, \psi(G))|  \nonumber \\
		&\ge |V(\psi(G))|^{|V(H)|} \left( \frac{|\Hom(K_2, \psi(G))|}{|V(\psi(G))|^2} \right)^{|E(H)|} \nonumber \\
		&= |V(\psi(G))|^{|V(H)| - 2|E(H)|} |\Hom(K_2, \psi(G))|^{|E(H)|}. 
\end{align}
We have 
\[
	|V(\psi(G))| = |\Hom(K_2, G)| = |V(G)|^2 t_{K_2}(G).
\]
On the other hand, by Lemma \ref{lem:1-1corresp} with
$H = K_2$, we have
\[
	|\Hom(K_2, \psi(G))| = |\Hom(K_2 \sq K_2, G)|,
\]
where since $K_2 \sq K_2$ is isomorphic to $C_4$, by 
Sidorenko's property of $C_4$, we have
\[
	|\Hom(K_2 \sq K_2, G)| = |V(G)|^4 t_{C_4}(G) \ge |V(G)|^4 (t_{K_2}(G))^4.
\]
Therefore in \eqref{eq:corresp}, we get
\begin{align*}
	|\Hom(K_2 \sq H, G)| &\ge \Big(|V(G)|^2 t_{k_2}(G)\Big)^{|V(H)|-2|E(H)|}
			\cdot \Big(|V(G)| t_{K_2}(G)\Big)^{4|E(H)|} \\
	&= |V(G)|^{2|V(H)|} t_{K_2}(G)^{|V(H)| + 2|E(H)|}.
\end{align*}
Since $|V(K_2 \sq H)| = 2|V(H)|$ and $|E(K_2 \sq H)| = 2|E(H)| + |V(H)|$, 
we deduce that $K_2 \sq H$ has Sidorenko's property.
\end{proof}

If one attempts to use the same idea as in the proof of 
Theorem \ref{thm:simple_case} to prove Theorem \ref{thm:box_product}
for general graphs $T$ other than $K_2$,
then the inequality corresponding to \eqref{eq:corresp} will be
\begin{align*} 
	|\Hom(T \sq H, G)| 
	\ge
	|V(\psi_T(G))|^{|V(H)| - 2|E(H)|} |\Hom(K_2, \psi_T(G))|^{|E(H)|}. 
\end{align*}
Thus we need estimates on
$|V(\psi_T(G))|=|\Hom(T, G)|$ and 
$|\Hom(K_2, \psi_T(G))| = |\Hom(K_2 \sq T, G)|$.
If $T$ has Sidorenko's property, then $K_2 \sq T$ also has
Sidorenko's property by Theorem \ref{thm:simple_case}. Hence in 
this case we have lower bound estimates on  both $|V(\psi_T(G))|$ and 
$|\Hom(K_2, \psi_T(G))|$. Unfortunately, these bounds do not transfer 
to a lower bound on $|\Hom(T \sq H, G)|$, since such a lower bound 
requires an upper bound
on $|V(\psi_T(G))|$ if $|V(H)| - 2|E(H)| < 0$.

We solve this problem when $T$ is a tree, through the following lemma 
asserting that it suffices to consider graphs $G$ with
bounded maximum degree.

\begin{LEM} \label{lem:sido_maxdegree}
A bipartite graph $H$ has Sidorenko's property
if and only if for all graphs $G$ with maximum degree
at most $\frac{4|E(G)|}{|V(G)|}$,
\begin{align*} 
	t_{H}(G) \ge t_{K_2}(G)^{|E(H)|}.
\end{align*}
\end{LEM}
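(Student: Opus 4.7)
The ``only if'' direction is immediate, since the hypothesis is merely the restriction of Sidorenko's property to a sub-class of graphs. The non-trivial direction is to show that verifying Sidorenko's inequality on graphs satisfying $\Delta(G) \le 4|E(G)|/|V(G)|$ suffices to deduce it for all $G$.

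The plan is as follows. Given an arbitrary graph $G$, if the max-degree condition already holds then the hypothesis directly yields the conclusion, so assume $\Delta(G) > 4|E(G)|/|V(G)|$. The idea is to construct an auxiliary graph $G^*$ that does satisfy the max-degree condition and from which the Sidorenko inequality for $G$ can be recovered. A natural candidate is the disjoint union $G^* = G \sqcup F_N$, where $F_N$ is $N$ disjoint copies of a fixed $\Delta(G)$-regular graph such as $K_{\Delta(G)+1}$. A straightforward parameter count shows that for $N$ large enough the average degree of $G^*$ is pushed close to $\Delta(G)$, forcing $\Delta(G^*) \le 4|E(G^*)|/|V(G^*)|$; applying the hypothesis to $G^*$ then yields a lower bound on $|\Hom(H, G^*)|$. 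Since Sidorenko's property is preserved under disjoint unions of connected components of $H$, we may assume $H$ is connected, so that $|\Hom(H, G^*)| = |\Hom(H, G)| + N\cdot|\Hom(H, K_{\Delta(G)+1})|$ and the combined lower bound gives an inequality involving $|\Hom(H, G)|$ after subtracting off the second summand.

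The main obstacle lies in isolating the $G$-part from this combined estimate in a way that recovers the Sidorenko bound with the correct constants. As $N \to \infty$, both $|V(G^*)|$ and $|E(G^*)|$ are dominated by $F_N$, so the hypothesis's bound on $|\Hom(H, G^*)|$ is asymptotically governed by the $F_N$-contribution (which, as a $\Delta$-regular graph, saturates the Sidorenko bound to leading order). Hence the information about $G$ surfaces only in the subleading corrections of the expansion, and a delicate matching of these terms---or a clever choice of $F_N$ at a critical size so that the $F_N$-terms cancel exactly against the regular part of the hypothesis's bound---will be required. A conceptually cleaner alternative, and likely the route taken by the authors, is to pass to the graphon (or weighted graph) formulation and apply a non-uniform rescaling of the vertex measure: shrinking the mass carried by high-degree vertices produces a weighted graph that satisfies the max-degree condition, and the Sidorenko inequality transfers back to $G$ via the fact that both $t_H$ and $t_{K_2}$ are preserved under the appropriate vertex-measure rescaling.
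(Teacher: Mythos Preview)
Your disjoint-union idea does not close, and you already identify why: after adding $N$ copies of $K_{\Delta+1}$, the hypothesis gives a lower bound on $|\Hom(H,G)| + N\cdot|\Hom(H,K_{\Delta+1})|$ whose leading term in $N$ is matched exactly by the second summand, so the inequality degenerates as $N\to\infty$ and carries no information about $G$ in the limit. Extracting the $G$-contribution from subleading terms would require an \emph{upper} bound on $|\Hom(H,K_{\Delta+1})|$ sharp to second order, which you do not have. Your fallback suggestion---rescaling vertex masses in a graphon---is also not a proof as stated: the hypothesis is only assumed for (unweighted) graphs, so you cannot apply it to a reweighted graphon, and the assertion that $t_H$ and $t_{K_2}$ are ``preserved'' under such a rescaling is false (both change, and not in a way that keeps $t_H/t_{K_2}^{|E(H)|}$ fixed).

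The paper's construction is a discrete, one-sided version of what you were reaching for. Set $\Delta = 2|E(G)|/|V(G)|$ and replace each vertex $v$ by $\lceil \deg(v)/\Delta\rceil$ new vertices, partitioning the neighbours of $v$ among them so that each new vertex has degree at most $\Delta$. Call the result $G'$. The projection $\pi:V(G')\to V(G)$ is a homomorphism, and because the partition of neighbours is disjoint, every edge of $G$ has a \emph{unique} preimage in $G'$; hence $h\mapsto \pi\circ h$ is an injection $\Hom(H,G')\hookrightarrow\Hom(H,G)$ (using that $H$ has no isolated vertices). One checks $|E(G')|=|E(G)|$ and $|V(G')|\le 2|V(G)|$, so $G'$ meets the max-degree condition and the hypothesis gives
\[
|\Hom(H,G)|\ \ge\ |\Hom(H,G')|\ \ge\ 2^{|V(H)|-2|E(H)|}\,|V(G)|^{|V(H)|}\,t_{K_2}(G)^{|E(H)|}.
\]
The constant $2^{|V(H)|-2|E(H)|}$ is then removed by the tensor-power trick (Lemma~\ref{lem:tensor}). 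The key idea you were missing is this vertex-splitting, which produces a genuine graph (so the hypothesis applies) and comes with the injection on homomorphisms that lets you compare back to $G$ without any asymptotic cancellation.
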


We also need the following lemma.
We omit the proof, which is based on tensor products of graphs. 
One may refer to Remark 2 of \cite{Sidorenko9192} (English version) for more details. 

\begin{LEM} \label{lem:tensor}
Let $H$ be  a bipartite graph. If there exists a constant $c$ depending
only on $H$ such that 
\[
t_{H}(G)\ge  c (t_{K_{2}}(G))^{|E(H)|}
~~~\mbox{for all graphs $G$,} 
\]
then $H$ has Sidorenko's property.
\end{LEM}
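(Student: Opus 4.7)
The plan is to apply the standard tensor power trick. For two graphs $G_1$ and $G_2$, define their (categorical) tensor product $G_1 \times G_2$ as the graph on vertex set $V(G_1) \times V(G_2)$ where $(u_1,u_2)$ is adjacent to $(v_1,v_2)$ if and only if $\{u_1,v_1\} \in E(G_1)$ and $\{u_2,v_2\} \in E(G_2)$. The key observation is that a map $\phi : V(H) \to V(G_1) \times V(G_2)$, written as $\phi(u) = (\phi_1(u), \phi_2(u))$, is a homomorphism from $H$ to $G_1 \times G_2$ if and only if both coordinate projections $\phi_1$ and $\phi_2$ are homomorphisms from $H$ to $G_1$ and $G_2$, respectively. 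This gives the multiplicative identity
\[
  |\Hom(H, G_1 \times G_2)| \;=\; |\Hom(H,G_1)| \cdot |\Hom(H,G_2)|,
\]
valid for every graph $H$ (no connectedness assumption needed). Dividing by $(|V(G_1)||V(G_2)|)^{|V(H)|}$, we get $t_H(G_1 \times G_2) = t_H(G_1) \, t_H(G_2)$.

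Given $H$ and a graph $G$, let $G^{\otimes k}$ denote the $k$-fold tensor product of $G$ with itself. Iterating the identity above yields $t_H(G^{\otimes k}) = t_H(G)^k$ and, in particular, $t_{K_2}(G^{\otimes k}) = t_{K_2}(G)^k$. Applying the hypothesis of the lemma to the graph $G^{\otimes k}$, we obtain
\[
  t_H(G)^k \;=\; t_H(G^{\otimes k}) \;\ge\; c\, \bigl(t_{K_2}(G^{\otimes k})\bigr)^{|E(H)|} \;=\; c\, \bigl(t_{K_2}(G)\bigr)^{k|E(H)|}.
\]

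We may assume $c > 0$, since otherwise the hypothesis is vacuous. If $t_{K_2}(G) = 0$, then $G$ has no edges and $t_H(G) = 0 = (t_{K_2}(G))^{|E(H)|}$, so the conclusion is trivial; otherwise both sides above are positive, and taking $k$-th roots gives
\[
  t_H(G) \;\ge\; c^{1/k} \bigl(t_{K_2}(G)\bigr)^{|E(H)|}.
\]
Letting $k \to \infty$, the factor $c^{1/k}$ tends to $1$, whence $t_H(G) \ge (t_{K_2}(G))^{|E(H)|}$ for every graph $G$, so $H$ has Sidorenko's property. There is no real obstacle here beyond verifying the multiplicativity of $|\Hom(H, \cdot)|$ under tensor products; the tensor power trick then removes the multiplicative constant $c$ automatically.
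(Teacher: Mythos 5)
Your proof is correct, and it is exactly the argument the paper has in mind: the paper omits the proof of Lemma \ref{lem:tensor} but states it is based on tensor products of graphs (citing Remark 2 of Sidorenko), which is precisely your tensor power trick using the multiplicativity of $|\Hom(H,\cdot)|$ under categorical products and letting $k\to\infty$ to absorb the constant $c$.
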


\begin{proof}[Proof of Lemma \ref{lem:sido_maxdegree}]
We may assume that $H$ has no isolated vertex, as adding 
an isolated vertex to a graph does not affect the value of $t_H(G)$
and $|E(H)|$.

Suppose that $H$ is a bipartite graph satisfying the given condition, and
let $G$ be an arbitrary graph (not necessarily satisfying
the maximum degree condition).

Let $\Delta = \frac{2|E(G)|}{|V(G)|}$, and 
let $G'$ be a graph obtained from $G$ by the following 
process. Fix an ordering of the vertices of $G$,
and take vertices $v$ one at a time 
according to the ordering.
Replace $v$ with $t = \lceil \frac{\deg(v)}{\Delta} \rceil$
vertices $v_1, \cdots, v_t$ and choose the neighbors
of these new vertices so that
(i) $N(v_i) \subseteq N(v)$, (ii) $N(v_i) \cap N(v_j) = \emptyset$
for all distinct pairs $i,j$, and
(iii) $\deg(v_i) \le \Delta$ for all $i$. 
Note that such a choice exists, as one can greedily
assign the neighbors of $v$ to the vertices $v_i$ under the given constraints.
Further note that during this process, 
$\deg(v)$ remains the same until $v$ is replaced, 
and the number of edges always remains the same as $|E(G)|$. 

Define a function $\pi:V(G')\ra V(G)$ as $\pi(v_i) = v$ for all $i$.
Since $\pi$ is a homomorphism from $G'$ to $G$,
we obtain a map $\phi: \Hom(H,G')\ra\Hom(H,G)$ 
such that $\varphi(h) := \pi \circ h$.
Further note that for an adjacent pair of vertices $v,w \in V(G)$, 
there exists a unique choice of $v' \in \pi^{-1}(v)$ and
$w' \in \pi^{-1}(w)$ such that $v'$ and $w'$ are adjacent in $G'$.
Therefore if $\pi \circ h_1 = \pi \circ h_2$ 
for some $h_1, h_2 \in \Hom(H,G')$, then for each edge $\{x,y\}$ of $H$,
we must have $h_1(x) = h_2(x)$ and $h_1(y) = h_2(y)$.
Since $H$ has no isolated vertex, we see that
$h_1(x) = h_2(x)$ for all $x \in V(H)$, i.e. $h_1 = h_2$. This implies that
our map $\varphi$ from  $\Hom(H,G')$ to $\Hom(H,G)$ is an injection.
Therefore, $|\Hom(H,G)| \ge |\Hom(H,G')|$.

The graph $G'$ has the same
number of edges as the graph $G$, and the number of
vertices is at most
\begin{align*}
	|V(G')| &= \sum_{v \in V(G)} \left\lceil \frac{\deg(v)}{\Delta} \right\rceil \\
			&\le |V(G)| + \sum_{v \in V(G)} \frac{\deg(v)}{\Delta}
			= |V(G)| + \frac{2|E(G)|}{\Delta} = 2|V(G)|.
\end{align*}
Combining this with the fact $|E(G')|=|E(G)|$, 
it follows that $G'$ has maximum degree 
$\Delta = \frac{2|E(G)|}{|V(G)|} \le \frac{4|E(G')|}{|V(G')|}$.
Hence $G'$ satisfies the given maximum degree condition, so
\begin{align*}
	|\Hom(H,G)| 
		\ge\,& |\Hom(H, G')| 
		\ge |V(G')|^{|V(H)|} \left(\frac{2|E(G')|}{|V(G')|^2} \right)^{|E(H)|}  \\
		\ge\,& 2^{|V(H)|-2|E(H)|} |V(G)|^{|V(H)|} \left(\frac{2|E(G)|}{|V(G)|^2} \right)^{|E(H)|}.
\end{align*}
By Lemma \ref{lem:tensor}, this concludes the proof.
\end{proof}

We are now ready to prove Theorem \ref{thm:box_product}.
As mentioned above, the proof follows the same line as of
the proof of Theorem \ref{thm:simple_case}, and uses 
Theorem \ref{thm:simple_case} as an ingredient.

\begin{proof}[Proof of Theorem \ref{thm:box_product}]
We may assume that $H$ has no isolated vertex, as adding 
an isolated vertex to a graph does not affect the value of $t_H(G)$
and $|E(H)|$.

Let $T$ be a tree with $\tau$ vertices, 
and let $G$ be a given graph. By Lemma \ref{lem:sido_maxdegree},
we may assume that $G$ has maximum degree
at most $\frac{4|E(G)|}{|V(G)|} = 2|V(G)|t_{K_2}(G)$.
By Lemma \ref{lem:1-1corresp} and the fact that $H$ has
Sidorenko's property, we have
\begin{align} \label{eq:corresp_general}
	|\Hom(T \sq H, G)| &= |\Hom(H, \psi_T(G))|  \nonumber \\
		&\ge |V(\psi_T(G))|^{|V(H)|} \left( \frac{|\Hom(K_2, \psi_T(G))|}{|V(\psi_T(G))|^2} \right)^{|E(H)|} \nonumber \\
		&= |V(\psi_T(G))|^{|V(H)| - 2|E(H)|} |\Hom(K_2, \psi_T(G))|^{|E(H)|}. 
\end{align}
Recall that $V(\psi_T(G)) = \Hom(T,G)$.
We can construct an element in $\Hom(T,G)$ by starting from an
arbitrary vertex of $T$, defining its image in $V(G)$, 
and then extending the homomorphism one vertex
at a time. By the condition on the maximum degree of $G$, we thus have
\begin{align}
	|V(\psi_T(G))| &= |\Hom(T, G)| \le |V(G)| \Big( 2|V(G)|t_{K_2}(G) \Big)^{\tau-1} \nonumber \\
			&= 2^{\tau-1}|V(G)|^{\tau} t_{K_2}(G)^{\tau-1}.  \label{eq:v_psi_t}
\end{align}
On the other hand, by Lemma \ref{lem:1-1corresp} with
$H = K_2$ and Theorem \ref{thm:simple_case}, we have
\begin{align} \label{eq:e_psi_t}
	|\Hom(K_2, \psi_T(G))| = |\Hom(T \sq K_2, G)| \ge |V(G)|^{2\tau} t_{K_2}(G)^{3\tau - 2}.
\end{align}
Since $H$ has no isolated vertex, we have $|V(H)| \le 2|E(H)|$, and
thus in \eqref{eq:corresp_general}, we may use the bounds from
\eqref{eq:v_psi_t} and \eqref{eq:e_psi_t} to obtain
\begin{align*}
	&|\Hom(T \sq H, G)|  \\
	\ge\,& \Big( 2^{\tau-1}|V(G)|^{\tau} 	t_{K_2}(G)^{\tau-1} \Big)^{|V(H)|-2|E(H)|}
			\Big(|V(G)|^{2\tau} t_{K_2}(G)^{3\tau - 2}\Big)^{|E(H)|} \\
	=\,& 2^{(\tau-1)(|V(H)|-2|E(H)|)} \cdot |V(G)|^{\tau|V(H)|}t_{K_2}(G)^{(\tau-1)|V(H)| + \tau |E(H)|}.
\end{align*}
Since $|V(T \sq H)| = \tau|V(H)|$ and $|E(T \sq H)| = (\tau-1)|V(H)| + \tau |E(H)|$, 
by Lemma \ref{lem:tensor}, we deduce that $T \sq H$ has Sidorenko's property.
\end{proof}

Since an arbitrary $d$-dimensional grid can be obtained from
the Cartesian product of $d$ paths, we obtain the following corollary.

\begin{COR}
For all $d \ge 1$, all $d$-dimensional grids have Sidorenko's property.
\end{COR}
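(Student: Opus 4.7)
The plan is a short induction on $d$ using Theorem \ref{thm:box_product} together with the classical fact that paths have Sidorenko's property (Blakley--Roy \cite{BlRo} and Mulholland--Smith \cite{MuSm}, as recalled in the introduction).

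For the base case $d=1$, a $1$-dimensional grid is simply a path $P_{n_1}$, which is already known to have Sidorenko's property. For the inductive step, suppose every $(d-1)$-dimensional grid has Sidorenko's property, and let $\Gamma = P_{n_1} \sq P_{n_2} \sq \cdots \sq P_{n_d}$ be an arbitrary $d$-dimensional grid. Cartesian product is associative up to isomorphism, so we may write
\[
\Gamma \;\cong\; P_{n_1} \sq \bigl( P_{n_2} \sq \cdots \sq P_{n_d} \bigr).
\]
The graph $H := P_{n_2} \sq \cdots \sq P_{n_d}$ is a $(d-1)$-dimensional grid, hence by the induction hypothesis it is a bipartite graph with Sidorenko's property. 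Moreover, $T := P_{n_1}$ is a tree. Therefore Theorem \ref{thm:box_product} applies and yields that $T \sq H \cong \Gamma$ has Sidorenko's property, completing the induction.

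I expect no real obstacle here: the only things to verify are (i) associativity of $\sq$ up to isomorphism, which is immediate from the definition on vertex sets $V(H_1)\times V(H_2)$ and the adjacency rule, and (ii) that each intermediate product $P_{n_2}\sq \cdots \sq P_{n_d}$ is bipartite, which follows since paths are bipartite and the Cartesian product of bipartite graphs is bipartite (a proper $2$-coloring is obtained from the parity of the sum of the two coordinates' levels). The only mild subtlety is that Theorem \ref{thm:box_product} requires $H$ itself to be bipartite, so one should state the induction hypothesis as ``every $d$-dimensional grid is a bipartite graph with Sidorenko's property'' so that bipartiteness is carried along as part of the inductive claim.
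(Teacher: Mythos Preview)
Your proof is correct and is exactly the argument the paper has in mind: the paper simply remarks that a $d$-dimensional grid is the Cartesian product of $d$ paths, so the corollary follows by repeatedly applying Theorem~\ref{thm:box_product}. Your explicit induction, together with the observation that bipartiteness is preserved under $\sq$, merely spells out what the paper leaves implicit.
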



\section{Concluding Remarks}
\label{sec:conclude}

In this section, we will say more about tree-arrangeability
and possible extensions of Theorem \ref{thm:box_product}.
First, we will provide a simple description of tree-arrangeability 
in terms of the vertices with maximal neighbors.
Second, we will explain how the tree-arrangeability is related to tree decompositions and Markov Random Field.
We conclude by proposing a couple of open questions related 
to Cartesian products that may illuminate 
a way to attack Sidorenko's conjecture.

\medskip

\noindent {\bf Tree-arrangeability and vertices with maximal neighborhood}.
To see whether a bipartite graph $H$ with bipartition $A\cup B$ is tree-arrangeable,
it suffices to consider only the vertices in $A$ 
whose neighborhoods are maximal with respect to inclusion.
A subset $U$ of $A$ is called {\em neighbor covering} if for each $a\in A$, there exists
$u\in U$ such that $\Lambda_a\subseteq\Lambda_u$.
If a neighbor covering set $U$ is $T$-arrangeable 
for a tree $T$ on $U$, then 
the tree on $A$ obtained by 
adding each $a\in A\setminus U$ to 
$T$ as a leaf adjacent to $u\in U$ with 
$\Lambda_a\subseteq\Lambda_u$
(if more than one such $u$ exists, then choose arbitrary 
one among them) makes $A$ tree-arrangeable.
Hence $H$ is tree-arrangeable if and only if there exists a 
neighbor covering set $U \subseteq A$ that is tree-arrangeable.
The cases when there exists a neighbor covering set of size 
one or two were discussed in the introduction.

\medskip

\noindent {\bf Tree-arrangeability and tree decompositions}.
Tree-arrangeability can be alternatively defined using tree decompositions.
A tree decomposition of a graph $H$, introduced by Halin \cite{Halin}
and developed by Robertson and Seymour \cite{RobertSeymour}, 
is a pair $(\mathcal{F}, T)$ of  a 
family $\mathcal{F}$  of vertex subsets and  a tree $T$ with vertex set $\mathcal{F}$
satisfying
\begin{enumerate}
\item $\bigcup_{X\in\mathcal{F}}X=V(H)$, 
\item for each $\{v,w\} \in E(H)$, there exists a set $X \in \mathcal{F}$ such that
$v, w \in X$, and
\item for $X,Y,Z\in \mathcal{F}$, $X\cap Y\subseteq Z$ 
whenever $Z$ lies on the path from $X$ to $Y$ in $T$.
\end{enumerate}
It is straightforward to check that a bipartite graph 
$H$ with bipartition $A\cup B$ is
tree-arrangeable if and only if there exists a tree 
decomposition of $H$ with 
$\mathcal{F}=\{\Lambda_a\cup\{a\} \,|\, a\in A\}$.

\medskip
\noindent {\bf Markov Random Field}.
Tree-arrangeability and the functions $f_u$ defined in 
Section \ref{sec:proof_mainthm} are also closely related to 
Markov Random Field theory.

A sequence of random variables 
$(y_v)_{v\in V(G)}$ is said to be a {\em Markov Random Field} 
with respect to a graph $G$ if for each $S\subseteq V(G)$
that makes $G\setminus S$ disconnected, whenever $C_1$ and $C_2$
are the vertex sets of distinct components of $G \setminus S$, the pair of sequences
of random variables $(y_{v})_{v\in C_1}$ and $(y_{v})_{v\in C_2}$
is independent, 
conditioned on $(y_v )_{v\in S}$.\footnote{There are a few non-equivalent definitions of 
a Markov Random Field. Here we state the most general definition.}
Lemma \ref{JHprop} (iii) shows that if a bipartite graph $H$ 
with bipartition $A \cup B$ is tree-arrangeable with a 
tree $T$ on $A$, 
then $(f_v)_{v \in A}$ for the random variables $f_v$ defined 
in Section~\ref{sec:proof_mainthm} is a Markov Random Field with respect 
to $T$.
It would be interesting to further investigate the connection between the theory of
Markov Random Fields and Sidorenko's conjecture.

\medskip

\noindent {\bf Extension of Cartesian product to non-bipartite graphs}.
For a given (not necessarily bipartite) graph $H$, 
define a bipartite graph $\phi(H)$ as follows: 
The bipartition of $\phi(H)$ consists of 
two disjoint copies of $V(H)$. 
Two vertices in distinct parts  are adjacent in $\phi(H)$ 
if they are copies of the same vertex in $H$, or two adjacent 
vertices in $H$. In particular, 
$\phi(H)$ has $2|E(H)|+|V(H)|$ edges.

\begin{figure}[h!]
 \centering
    \includegraphics[width=0.8\textwidth]{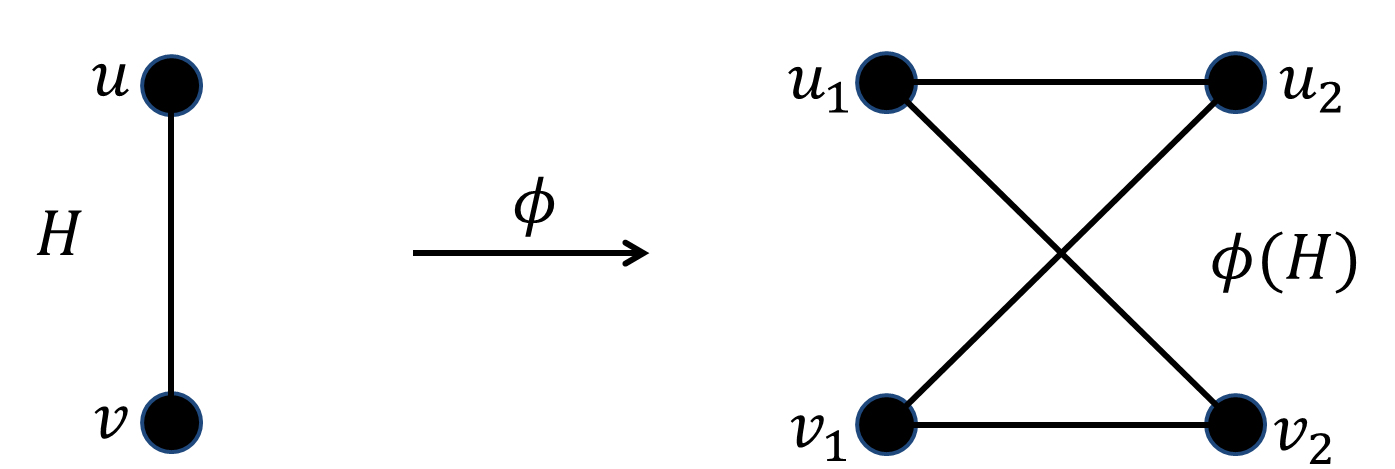}
   \caption{Blow-up via $\phi$.}\label{blowup}
  \end{figure}

It is not too difficult to see that for bipartite graphs $H$,
we have $\phi(H) = K_2 \sq H$. Hence the operation $\phi$ is more
restricted than Cartesian products when considering bipartite graphs.
However, the operation $\phi$ has
the advantage of being applicable to non-bipartite graphs.
For example, since $\phi(K_k) = K_{k,k}$,
we know that $\phi(K_k)$ has Sidorenko's property
for all $k \ge 2$. Thus $\phi(H)$ may have Sidorenko's property
even if $H$ is a non-bipartite graph. Also note that
$\phi(C_5)$ is $K_{5,5} \setminus C_{10}$ which is the minimal
bipartite graph unknown to satisfy Sidorenko's conjecture.
The operation $\phi$ provides many interesting graphs for 
which Sidorenko's conjecture is not known to be true.

We conclude the paper with some open problems regarding 
the operator $\phi$.
We believe that the family $\{\phi(C_{2k+1})\}_{k \ge 1}$ can be
an interesting starting point in further studying
Sidorenko's conjecture. The only known graph to have Sidorenko's
property in this family is $C_3$.
\begin{QUES}
Does there exist an integer $k \ge 2$ such that $\phi(C_{2k+1})$ has Sidorenko's property?
\end{QUES}

Since $\phi(H) = K_2 \sq H$ for bipartite graphs, 
Theorem \ref{thm:box_product} implies that $\phi(H)$ has Sidorenko's property
as long as $H$ does. Hence $\phi(H)$ is `more likely' to have
Sidorenko's property than $H$.
For example, since $\phi(K_r)=K_{r,r}$ for  integers $r\ge 1$, 
we know that $\phi(K_r)$ has Sidorenko's property, 
while  $K_r$ is not even a bipartite graph. 
(Recall that a graph $H$ with odd cycles cannot satisfy Sidorenko's property as $t_H (G) =0$ for bipartite graphs $G$).
Thus, the following question may be posed. 

\begin{QUES}
For a (not necessarily bipartite)  graph $H$, does there exist a non-negative integer $k=k_{_{\! H}}$ such that 
$\phi^k(H)$ has Sidorenko's property?
\end{QUES}
If Sidorenko's conjecture is true, then it certainly implies 
that the answers to the questions above are both yes.
Even if Sidorenko's conjecture turns out to be false,
it is possible that the answers 
to the questions are positive.

\medskip

\noindent {\bf Acknowledgement.} Part of this work was done
while Choongbum Lee and Joonkyung Lee were visiting 
Jeong Han Kim at KIAS.
We would like to thank David Conlon for 
his useful comments and suggestion to consider the grid graphs, and
Jacob Fox for providing the idea behind Lemma \ref{lem:sido_maxdegree}.
We would also like to thank the anonymous referee for the valuable comments.


\begin{thebibliography}{100}
\bibitem{BenPeres}
I. Benjamini and Y. Peres, A correlation inequality for tree-indexed Markov chains, in \emph{Seminar on
Stochastic Processes}, Proc. Semin., Los Angeles/CA (USA) 1991, Prog. Probab. \textbf{29}, 1992, 7--14.

\bibitem{BlRo}
G. Blakley and P. Roy,
H\"{o}lder type inequality for symmetrical matrices with non-negative entries,
\emph{Proc. Amer. Math. Soc.} (1965) \textbf{16}, 1244-1245.


\bibitem{CoFoSu}D. Conlon, J. Fox, and B. Sudakov, An approximate
version of Sidorenko's conjecture, \emph{Geom. Funct. Anal.} \textbf{20}
(2010), 1354-1366.

\bibitem{DaLeMo}
R. Daudel, R. Lefebvre, and C. Moser, \emph{Quantum chemistry: Methods and applications},
Interscience Publishers, New York-London (1959).

\bibitem{ErSi}P. Erd\H{o}s and M. Simonovits, Cube-supersaturated
graphs and related problems, in \emph{Progress in graph theory }(Waterloo,
Ont., 1982), Academic Press, Toronto, ON, 1984, 203-218.

\bibitem{ErStone}
P. Erd\H{o}s and A. H. Stone, 
On the structure of linear graphs, in \emph{Bull. Amer. Math. Soc.} \textbf{52}(12) (1946), 1087-1091.

\bibitem{FKG}
C. M. Fortuin, P.W. Kasteleyn, and J. Ginibre, 
Correlation inequalities on some partially ordered sets, 
\emph{Comm. Math. Phys.} \textbf{22} (1971), 89-03.

\bibitem{StatPhy}
Hans-Otto Georgii, \emph{Gibbs measures and phase transitions},
\textbf{Walter de Gruyter} (1988).

\bibitem{Halin}
R. Halin, S-functions for graphs, \emph{J. Geom.} \textbf{8} (1976), 171-186.

\bibitem{Hatami}H. Hatami, Graph norms and Sidorenko's conjecture,
\emph{Israel J. Math.}\textbf{\emph{175}} (2010), 125-150.

\bibitem{London}
D. London, Two inequalities in nonnegative symmetric matrices. Pac. J. Math. 
\emph{16}, 
515-536 (1966)


\bibitem{LoSi}L. Lov\'asz and M. Simonovits, On the number of complete
subgraphs in a graph II, Studies in pure mathematics, Birkh\"auser
(1983), 459-495.



\bibitem{MuSm}
H. Mulholland and C. Smith,
An inequality arising in genetical theory,
\emph{Amer. Math. Monthly} (1959) \textbf{66}, 673--683.

\bibitem{Nikiforov}V. Nikiforov, The number of cliques in graphs
of given order and size, \emph{Trans. Amer. Math. Soc.} \textbf{363}
(2011), 1599-1618.

\bibitem{PmanPeres}
R. Pemantle and Y. Peres, Domination between trees and application to an explosion problem,
\emph{Ann. Probab.} \textbf{22} (1994), 180-194

\bibitem{Razborov}A. Razborov, On the minimal density of triangles
in graphs, \emph{Combin. Probab. Comput. }\textbf{17} (2008), 603-618.

\bibitem{Reiher}C. Reiher, The clique density theorem, arXiv:1212.2454
{[}math.CO{]}.

\bibitem{RobertSeymour}N. Robertson and P. D. Seymour, Graph minors III: Planar tree-width, 
\emph{J. Combin. Theory Ser. B.} \textbf{36(1)} (1984), 49-64.

\bibitem{Sidorenko86}A. F. Sidorenko, Extremal problems in graph
theory and inequalities in functional analysis (in Russian), in \emph{Proceedings
of the Soviet Seminar on Discrete Mathematics and its applications
(in Russian)}, ed. Lupanov, O.B., Moscow, Moscow State University
(1986), 99-105.

\bibitem{Sidorenko9192}A. F. Sidorenko, Inequalities for functionals
generated by bipartite graphs, \emph{Diskretnaya Matematika} \textbf{3}
(1991), 50-65 (in Russian), \emph{Discrete Math. Appl.}\textbf{ 2
}(1992), 489-504 (in English).

\bibitem{Sidorenko93}A. F. Sidorenko, A correlation inequality for
bipartite graphs, \emph{Graphs Combin. }\textbf{9} (1993), 201-204.

\bibitem{Sidorenko94}A. F. Sidorenko, An analytic approach to extremal
problems for graphs and hypergraphs,in \emph{Extremal problems for
finite sets} (Visegr\'ad, 1991), Bolyai Soc. Math. Stud., 3, J\'anos
Bolyai Math. Soc., Budapest, 1994, 423-455.

\bibitem{Simonovits}M. Simonovits, Extremal graph problems, degenerate
extremal problems and super-saturated graphs, in \emph{Progress in
graph theory }(Waterloo, Ont., 1982), Academic Press, Toronto, ON,
1984, 419-437.


\bibitem{ImgProcess}
Stan Z. Li. \emph{Markov Random Field Modeling in Image Analysis}, \textbf{Springer} (2001).

\bibitem{Stell}
G. Stell,
Generating functionals and graphs,
in \emph{Graph Theory and Theoretical Physics}, Academic Press, London (1967), 281--300.


\bibitem{LiSz}J.X. Li and B. Szegedy, On the logarithimic calculus
and Sidorenko's conjecture, arXiv:1107.1153 {[}math.CO{]}.

\bibitem{Turan}P. Tur\'an, Eine Extremalaufgabe aus der Graphentheorie,
\emph{Mat. Fiz. Lapok} \textbf{48} (1941), 436\textendash{}452.
\end{thebibliography}
\end{document}